\newtheorem{thm}{Theorem}[section]
\newtheorem{cor}[thm]{Corollary}
\newtheorem{lem}[thm]{Lemma}
\theoremstyle{definition}
\newtheorem{defn}[thm]{Definition}
\newtheorem{que}[thm]{Question}
\newtheorem{remk}[thm]{Remark}
\newcommand{\bH}{\boldsymbol{H}}
\journal{xxx}
\begin{document}

\begin{frontmatter}
\title{On universal minimal proximal flows of topological groups}
\author{Xiongping Dai}
\ead{xpdai@nju.edu.cn}
\address{Department of Mathematics, Nanjing University, Nanjing 210093, People's Republic of China}

\author{Eli Glasner}
\ead{glasner@math.tau.il}
\address{Department of Mathematics, Tel Aviv University, Tel Aviv, Israel}

\begin{abstract}
In this paper,
we show that the
action of
a characteristically simple,
non-extremely amenable (non-strongly amenable, non-amenable) group
on its universal minimal (minimal proximal, minimal strongly proximal) flow
is effective.
We present necessary and sufficient conditions, for the
action of a topological group with trivial center
on its universal minimal proximal flow, to be effective.
A theorem of Furstenberg
about the isomorphism of the universal minimal proximal flows of a discrete group
and its subgroups of finite index (\cite[Theorem~II.4.4]{G76})
is strengthened.
Finally, for a pair of groups $H < G$ the same method is applied in order to extend the action
of $H$ on its universal minimal proximal flow to an action of its commensurator group
$\mathrm{Comm}_G(H)$.
\end{abstract}

\begin{keyword}
Universal minimal proximal flow  $\cdot$ Effective action $\cdot$ Strong/extreme amenability

\medskip
\MSC[2010] 37B05
\end{keyword}
\end{frontmatter}

\section{Introduction}\label{sec1}
By a topological group we mean a group $G$ endowed with a T$_2$-topology such that the algebraic operation $(x,y)\mapsto xy^{-1}$ be continuous from $G\times G$ to $G$. Unless stated otherwise, a \textit{flow} is a triple $(G,X,\pi)$, sometimes write $(G,X)$, where $X$ is a compact T$_2$-space, $G$ is a topological group with the neutral element $e$, and where the action map $\pi\colon G\times X\rightarrow X$ is such that
\begin{itemize}
\item $\pi\colon(t,x)\mapsto tx$ is jointly continuous; $\pi_{t}\circ\pi_s=\pi_{ts}\ \forall s,t\in T$; and
 $\pi_e=\textit{id}_X$ the identity map of $X$ to itself.
\end{itemize}
A flow $(G,X,\pi)$ is called \textit{effective} if $tx=x$ for all $x\in X$ implies $t=e$; and we say $(G,X,\pi)$ is 
\textit{free}
if $t\in T$ with $t\not=e$ implies $tx\not=x$ for every $x\in X$ (see e.g. \cite{Aus,GU}).
Given any $t\in T$ with $t\not=e$, we will say that $(G,X,\pi)$ is \textit{effective at $t$} if
$\pi_t\not=\textit{id}_X$,
and \textit{free at $t$} if $tx\not=x$ for every $x\in X$.

By $\mathrm{Aut}(G)$ we will denote the group of
topological automorphisms of $G$ and we will
write $\mathrm{Aut}(G)t=\{a(t)\,|\,a\in\mathrm{Aut}(G)\}$.
Let $\mathfrak{C}(G)$ be the center of the group $G$; that is,
\begin{gather*}\mathfrak{C}(G)=\{t\in G\,|\,tg=gt\ \ \forall g\in G\}.
\end{gather*}
We will show that the
groups
$\mathrm{Aut}(G)$ and $\mathfrak{C}(G)$ are important for the dynamics of the universal flows with phase group $G$.



\begin{defn}\label{def1.1}
A topological group $G$ is said to be \textit{(topologically) characteristically simple} if $\langle\mathrm{Aut}(G)t\rangle$ is dense in $G$ for all $t\in G$ with $t\not=e$, where  $\langle\mathrm{Aut}(G)t\rangle$ is the subgroup generated algebraically by $\mathrm{Aut}(G)t$.
\end{defn}

Of course every (topologically) simple group is (topologically) characteristically simple. But,
for example, for the topological group of rational numbers $\mathbb{Q}$, equipped with the topology it
inherits from $\mathbb{R}$, we have $\mathrm{Aut}(\mathbb{Q}) = \mathbb{Q}^* = \{t \in \mathbb{Q}\,|\, t \not=0\}$,
so clearly this abelian group is characteristically simple.
Similarly, the groups $\mathbb{R}^d, \ d \ge 2$ are obviously characteristically simple.
In \cite{CN} the reader can find a classification of the locally compact, compactly generated,
characteristically simple groups.


A flow $(G,X,\pi)$ is said to be \textit{minimal} if and only if there is no proper invariant closed subset of $X$---that is, $\textrm{cls}_XGx=X$ for all $x\in X$. Since G.~Birkhoff 1927~\cite{B}, minimal flows
play a central role in the theory of topological dynamical systems (cf.~\cite{F67, E69, G76, V77, Aus} etc.).
In this paper, we will be mainly concerned with the effectiveness of the universal minimal
and the universal minimal (strongly) proximal
flows of a general topological group $G$.

In Section \ref{sec3} we show that the
action of a characteristically simple,
non-extremely amenable (non-strongly amenable, non-amenable) group
on its universal minimal (minimal proximal, minimal strongly proximal) flow
is effective.

In Section \ref{sec4}
we present necessary and sufficient conditions, for the
action of a topological group with trivial center
on its universal minimal proximal flow, to be effective.

In Section \ref{sec5} we strengthen
a theorem of Furstenberg
about the isomorphism of the universal minimal (strongly) proximal flows of a discrete group $G$ and its subgroups of finite index.

Finally, in Section \ref{Comm}, for a pair of groups $H < G$ the same method is applied
in order to extend the action of $H$ on its universal minimal proximal flow to an action of its commensurator group
$\mathrm{Comm}_G(H)$.


\section{Statements of the main results}
\subsection{Universal minimal flows}\label{sec2.1}
Let $G$ be any topological group. Recall that a minimal flow $(G,X,\pi)$ is called a \textit{universal minimal flow of $G$} if any minimal flow $(G,Y,\psi)$ is a factor of $(G,X,\pi)$; that is, there exists a (not necessarily unique) continuous map $h$
from
$X$ onto $Y$ with $h(\pi(t,x))=\psi(t,h(x))$ for all $t\in G$ and $x\in X$, written as
$(G,X,\pi)
\xrightarrow{h}(G,Y,\psi)$.
See, e.g., \cite{E60, V77, Aus}.
\begin{itemize}
\item[(A)] \textit{Given any topological group $G$, there exists a unique (up to isomorphism) universal minimal flow $(G,\mathrm{M}(G))$.} (See, e.g.,~\cite[Corollary~1 
and Theorem~2]{E60}, \cite[Theorem~8.1]{Aus}, and \cite{GL}.) This theorem will be generalized to topological semigroups; see Theorem~\ref{thm3.5}.

\item[(B)]
\textit{If $G$ is a locally compact group, then its universal minimal flow is
free.}
(See \cite[Theorem~3]{E60} and \cite[Theorem~8.3]{Aus} for $G$ a discrete group;
and \cite[Theorem~2.2.1]{V77} for any locally compact group.
See also \cite[Section 3.3]{P}.)
\end{itemize}
However,
for non-locally compact groups
the question whether the universal minimal flow is
free, is an interesting one and has drawn a lot of attention in the last decade;
see e.g. \cite{P}.
For example, when $G$ is \textit{extremely amenable} (i.e., the universal minimal flow of $G$
is trivial with one-point phase space; cf., e.g.,~\cite{G98}), then the universal minimal flow of $G$ is
of course not effective.

In the literature~\cite{E60, Aus, V77}, the freeness is usually proven by using the
$\beta$-compactification
of $G$.
Using a different approach,
established originally for proving the effectiveness of the universal minimal proximal flow
in \cite{G76}, we will show in $\S\ref{sec3.1}$ the following
result.

\begin{thm}\label{thm2.1}
Let $G$ be a topological group which is not extremely amenable; and let $(G,\mathrm{M}(G))$ be its universal minimal flow. Then
$(G,\mathrm{M}(G))$ is effective at every $t\in G$ with $\mathrm{cls}_G^{}\langle\mathrm{Aut}(G)t\rangle=G$.
\end{thm}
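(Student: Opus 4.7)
The plan is to argue by contrapositive: assuming $\pi_t = \mathit{id}_{M(G)}$, deduce that the whole action of $G$ on $M(G)$ is trivial, so that $M(G)$ is a singleton and $G$ is extremely amenable. To make this work I need a mechanism that transports the statement "$t$ acts trivially" to the statement "$a(t)$ acts trivially" for every $a \in \mathrm{Aut}(G)$. The natural tool is the observation that the universal minimal flow is invariant (up to isomorphism) under twisting the action by a topological automorphism of the acting group.

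More precisely, first I would set $K = \{s \in G : \pi_s = \mathit{id}_{M(G)}\}$ and note that $K$ is a closed subgroup of $G$: it is the kernel of the continuous group homomorphism $G \to \mathrm{Homeo}(M(G))$ in the pointwise-convergence topology, or more directly, it is the intersection over $x \in M(G)$ of the closed stabilizers $\mathrm{Stab}(x)$. The goal then becomes showing $K$ is invariant under $\mathrm{Aut}(G)$; combined with the hypothesis $t \in K$ and the assumption $\mathrm{cls}_G\langle \mathrm{Aut}(G)t\rangle = G$, this will force $K = G$.

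To prove $\mathrm{Aut}(G)$-invariance of $K$, for each $a \in \mathrm{Aut}(G)$ I would introduce the twisted action $\pi^a(g,x) := \pi(a(g),x)$ on $M(G)$. The orbits of $\pi^a$ coincide with those of $\pi$, so $(G,M(G),\pi^a)$ is minimal; and if $(G,Y,\psi)$ is an arbitrary minimal $G$-flow, then so is the twist $(G,Y,\psi^{a^{-1}})$, and the universal property of $(G,M(G),\pi)$ produces a factor map $h$ which, by direct substitution, is simultaneously a factor map $(G,M(G),\pi^a) \to (G,Y,\psi)$. Thus $(G,M(G),\pi^a)$ is itself a universal minimal flow, and by the uniqueness statement (A), there exists a homeomorphism $\phi_a : M(G) \to M(G)$ with $\phi_a(gx) = a(g)\phi_a(x)$ for all $g \in G$, $x \in M(G)$. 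If $s \in K$, applying this equivariance with $g = s$ gives $\phi_a(x) = a(s)\phi_a(x)$ for every $x$; surjectivity of $\phi_a$ then yields $a(s) \in K$, so $\mathrm{Aut}(G) K \subseteq K$.

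Combining the pieces: $K$ is a closed subgroup containing $\mathrm{Aut}(G)t$, hence $K \supseteq \mathrm{cls}_G\langle \mathrm{Aut}(G)t\rangle = G$, so $G$ acts trivially on $M(G)$. By minimality, $M(G)$ is a single point, contradicting the non-extreme-amenability of $G$. The main conceptual step, and the one I expect to be the principal obstacle to verify rigorously, is the existence of the conjugating homeomorphism $\phi_a$; everything else is a short deduction, but this step relies on the uniqueness of the universal minimal flow and on a careful check that twisting by $a \in \mathrm{Aut}(G)$ preserves both minimality and the universal property.
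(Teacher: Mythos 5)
Your proposal is correct and follows essentially the same route as the paper: twist the action by each $a\in\mathrm{Aut}(G)$, obtain a homeomorphism $\phi_a$ of $\mathrm{M}(G)$ satisfying $\phi_a(gx)=a(g)\phi_a(x)$, and use it together with the density of $\langle\mathrm{Aut}(G)t\rangle$ and minimality to force $\mathrm{M}(G)$ to be a point. The only (harmless) difference is how the intertwining homeomorphism is produced: you check that the twisted flow is again universal and invoke the uniqueness statement (A), whereas the paper only needs minimality of the twisted flow, takes the factor map $\hat a$ given by universality of $(G,\mathrm{M}(G))$, and upgrades it to a homeomorphism via coalescence (Ellis' lemma) applied to $\hat a\circ\widehat{a^{-1}}$.
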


In particular, if the canonical flow $\mathrm{Aut}(G)\times G\rightarrow G$ by $(a,t)\mapsto a(t)$ is transitive at $t\in G$, then either $G$ is extremely amenable or $(G,\mathrm{M}(G))$ is effective at $t$.

It is interesting to note that our effectiveness condition of the universal minimal flow of a topological group is in fact independent of the phase space.

\begin{cor}\label{cor2.2}
Let $G$ be a
characteristically simple
group which is not extremely amenable; then $G$ acts effectively on its universal minimal space $\mathrm{M}(G)$.
\end{cor}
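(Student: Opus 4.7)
The statement is essentially an immediate corollary of Theorem~\ref{thm2.1}, so the plan is a short deduction rather than a construction.

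First, I would unpack the hypothesis on $G$ in terms that match the conclusion of Theorem~\ref{thm2.1}. Characteristic simplicity (Definition~\ref{def1.1}) says exactly that for every $t\in G$ with $t\ne e$, the subgroup $\langle\mathrm{Aut}(G)t\rangle$ is dense in $G$; in the notation of Theorem~\ref{thm2.1} this reads $\mathrm{cls}_G\langle\mathrm{Aut}(G)t\rangle=G$. So the hypothesis of Theorem~\ref{thm2.1} is satisfied at \emph{every} non-identity element.

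Second, I would invoke Theorem~\ref{thm2.1}. Since $G$ is by assumption not extremely amenable, the theorem yields that the canonical action of $G$ on $\mathrm{M}(G)$ is effective at $t$, i.e.\ $\pi_t\ne\textit{id}_{\mathrm{M}(G)}$, for every $t\ne e$.

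Third, I would translate ``effective at every $t\ne e$'' into ``effective'' in the sense of the definition given in Section~\ref{sec1}. If $t\in G$ satisfies $tx=x$ for all $x\in\mathrm{M}(G)$, then $\pi_t=\textit{id}_{\mathrm{M}(G)}$, which by the previous step forces $t=e$; this is exactly the effectiveness of $(G,\mathrm{M}(G))$.

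There is no real obstacle: the entire content lies in Theorem~\ref{thm2.1}, and the corollary simply observes that characteristic simplicity guarantees its pointwise hypothesis holds at every non-identity element uniformly. The only thing worth double-checking when writing the proof is that Definition~\ref{def1.1} is stated in terms of density of $\langle\mathrm{Aut}(G)t\rangle$ rather than of $\mathrm{Aut}(G)t$ itself, which matches the closure-of-subgroup condition in Theorem~\ref{thm2.1} verbatim.
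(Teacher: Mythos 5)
Your deduction is correct and is exactly the paper's (implicit) argument: characteristic simplicity makes the density hypothesis of Theorem~\ref{thm2.1} hold at every $t\neq e$, so the theorem gives effectiveness at every non-identity element, which is precisely effectiveness of $(G,\mathrm{M}(G))$.
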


A {\it semiflow} is a triple
$(T,X,\pi)$ where $X$ is a compact T$_2$-space, $T$ a topological semigroup
with a neutral element $e$ (a monoid), and the action map $\pi\colon G\times X\rightarrow X$ satisfies the properties
\begin{itemize}
\item $\pi\colon(t,x)\mapsto tx$ is jointly continuous; $\pi_{t}\circ\pi_s=\pi_{ts}\ \forall s,t\in T$; and

\item $\pi_e=\textit{id}_X$ the identity map of $X$ to itself.
\end{itemize}
Let $(T,X,\pi)$ and $(T,Y,\psi)$ be two semiflows; a continuous surjection $X\xrightarrow{h}Y$ is called a homomorphism from $(T,X,\pi)$ onto $(T,Y,\psi)$, written as $(T,X,\pi)\xrightarrow{h}(T,Y,\psi)$, if
$h(\pi(t,x))=\psi(t,h(x))$ for all $t\in T$ and $x\in X$.

The idea of Theorem~\ref{thm2.1}
is also useful for universal minimal semiflows of
characteristically simple
semigroups (with the obvious definition); see Theorem~\ref{thm3.6}, Theorem~\ref{thm3.7} and Corollary~\ref{cor3.8} in $\S\ref{sec3.2}$.

Of course proving that a flow is free is a much stronger result than showing that
it is effective; so in that respect our effectiveness results yield nothing new for locally compact
groups. Nonetheless, we hope that for non-locally compact acting groups this approach
will become useful.

\subsection{Universal minimal proximal flows}\label{sec2.2}
A flow $(G,X,\pi)$ is called \textit{proximal} if for any $x,y\in X$ there is a net $\{t_n\}$ in $G$ such that $\lim t_nx=\lim t_ny$. Recall that a minimal proximal flow of a topological group $G$ is \textit{universal} if it has every minimal proximal flow with the same phase group $G$ as a factor (cf.~\cite[$\S$II.4]{G76}).
\begin{itemize}
\item[(C)] \textit{For every topological group $G$, there exists a unique (up to isomorphism) universal minimal proximal flow for it.} We will denote this universal minimal proximal flow by $(G,\Pi(G))$. (See 
\cite[Proposition~II.4.2]{G76}.)
\end{itemize}
We notice that a topological group $G$ is \textit{strongly amenable} if and only if $\Pi(G)$ is a singleton (cf.~\cite[p.~22]{G76}).
Abelian, and more generally nilpotent, groups are strongly amenable (cf.~\cite[Theorem~II.3.4]{G76}).

Let $G$ be a topological group; we denote by $\mathrm{Homeo}(\Pi(G))$
the group of
self homeomorphisms of $\Pi(G)$ in the sequel.
\begin{itemize}
\item[(D)]
\textit{There is a homomorphism $a\mapsto\hat{a}$ of $\mathrm{Aut}(G)$ into $\mathrm{Homeo}(\Pi(G))$.
The homeomorphism $\hat{a}$ satisfies the equation
$\hat{a}(tx) = a(t)\hat{a}(x)$ for every $t \in G$ and $x \in \Pi(G)$.
For each $t\in G$ it sends the inner-automorphism $\sigma_{\!t}\colon g\mapsto tgt^{-1}$ to the homeomorphism
$\widehat{\sigma_{\!t}}\colon x\mapsto tx$ of $\Pi(G)$.
The flow
$(G, \Pi(G))$ is effective if and only if
the map $t\mapsto\widehat{\sigma_{\!t}}$, from $G$ to $\mathrm{Homeo}(\Pi(G))$,
is one-to-one.}

(This is due to
Furstenberg; see
\cite[Proposition~II.4.3]{G76}.)
\end{itemize}

It is easy to check that if $(G,\Pi(G))$ is effective, then the universal minimal flow of $G$ is also effective. So we are now concerned with the effectiveness of the universal minimal proximal flow $(G,\Pi(G))$.
Since a central element of $G$ must act as the identity on $\Pi(G)$ we have to assume that
$G$ has a trivial center, $\mathfrak{C}(G) =\{e\}$. Also notice that  $\mathfrak{C}(G)$
is the kernel of the homomorphism $t \mapsto \sigma_t$ from $G$ into $\mathrm{Aut}(G)$.

We can strengthen Furstenberg's result (D)
as follows:

\begin{thm}\label{thm2.3}
Let $G$ be a topological group with $\mathfrak{C}(G)=\{e\}$. Then the following statements are pairwise equivalent.
\begin{enumerate}
\item[$(1)$] $(G,\Pi(G))$ is effective.
\item[$(2)$] The map $t\mapsto\widehat{\sigma_{\!t}}$, from $G$ to $\mathrm{Homeo}(\Pi(G))$,
is one-to-one.
\item[$(3)$] $a\mapsto\hat{a}$ of $\mathrm{Aut}(G)$ to $\mathrm{Homeo}(\Pi(G))$ is one-to-one.
\end{enumerate}
Hence if one of the above $(1), (2), (3)$ holds, then the universal minimal flow of $G$ is effective.
\end{thm}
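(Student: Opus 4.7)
The plan is to exploit Furstenberg's fact (D) together with the trivial-center hypothesis; no topological dynamics beyond what has already been recorded in (C) and (D) is needed. Note first that the equivalence (1)$\Leftrightarrow$(2) is precisely the content of (D), so the substance of the theorem is the equivalence (2)$\Leftrightarrow$(3), together with the concluding implication about $\mathrm{M}(G)$.

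For (3)$\Rightarrow$(2), I would suppose $t,s\in G$ satisfy $\widehat{\sigma_t}=\widehat{\sigma_s}$ and apply the hypothesized injectivity of $a\mapsto\hat{a}$ to $\sigma_t,\sigma_s\in\mathrm{Aut}(G)$ to obtain $\sigma_t=\sigma_s$. This says $ts^{-1}$ commutes with every element of $G$, i.e.\ $ts^{-1}\in\mathfrak{C}(G)$; by the trivial-center hypothesis this forces $t=s$.

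For the less obvious direction (2)$\Rightarrow$(3), the homomorphism property of $a\mapsto\hat{a}$ (also from (D)) reduces the task to showing that $\hat{a}=\mathrm{id}_{\Pi(G)}$ implies $a=\mathrm{id}_G$. Substituting $\hat{a}=\mathrm{id}$ into the equivariance relation $\hat{a}(tx)=a(t)\hat{a}(x)$ gives $tx=a(t)x$ for all $t\in G$ and $x\in\Pi(G)$, in other words $\widehat{\sigma_t}=\widehat{\sigma_{a(t)}}$ for every $t\in G$; (2) then yields $a(t)=t$ for all $t$, as desired. The one mildly subtle point, and the only step worth calling an obstacle, is recognising that the equivariance of $\hat{a}$ couples the dynamics on $\Pi(G)$ (which (2) controls) with the action of $a$ on $G$, and that this coupling is exactly what transports injectivity at the level of automorphisms to the concrete injectivity condition (2).

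Finally, the closing assertion is immediate from the observation recorded just before the theorem: since $(G,\Pi(G))$ is itself a minimal flow, it is a factor of $(G,\mathrm{M}(G))$, so any $t\in G$ acting trivially on $\mathrm{M}(G)$ must act trivially on $\Pi(G)$, and effectiveness of $(G,\Pi(G))$ then forces $t=e$.
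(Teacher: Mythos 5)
Your proposal is correct and follows essentially the same route as the paper: $(1)\Leftrightarrow(2)$ comes from the identification $\widehat{\sigma_{\!t}}=\pi_t$ as in (D), the implication towards $(3)$ uses the equivariance $\hat{a}(tx)=a(t)\hat{a}(x)$ to force $a(t)=t$ (the paper phrases it as $(1)\Rightarrow(3)$, you as $(2)\Rightarrow(3)$, which is the same argument given the first equivalence), and $(3)\Rightarrow(2)$ uses triviality of the center to make $t\mapsto\sigma_{\!t}$ injective. The closing deduction about $\mathrm{M}(G)$ via the factor map onto $\Pi(G)$ is also exactly the paper's observation.
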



We will prove this theorem in $\S\ref{sec4}$ following the framework established in \cite[$\S$II.4]{G76}.

It follows from
(D) (\cite[Proposition~II.4.3]{G76}) that
the universal minimal proximal flow $(G,\Pi(G))$ can be extended to a flow
$(\mathrm{Aut}(G),\Pi(G))$,
with the discrete topology of $\mathrm{Aut}(G)$,
as follows:
\begin{equation*}
\xi\colon\mathrm{Aut}(G)\times\Pi(G)\rightarrow\Pi(G);\quad (a,x)\mapsto\hat{a}(x).
\end{equation*}
However, usually
in $(\mathrm{Aut}(G),\Pi(G))$,
we can not take $\mathrm{Aut}(G)$ to be equipped with
its natural compact-open topology.

A simple application of Theorem~\ref{thm2.3} is
Corollary~\ref{cor4.3}
which says that, for $G$ with trivial center,
$(\mathrm{Aut}(G),\Pi(G))$ is effective if so is $(G,\Pi(G))$.

In particular, in a way similar to
the situation in Theorem~\ref{thm2.1},
we may consider the effectiveness of the universal minimal proximal flow of a
characteristically simple
group.

\begin{thm}\label{thm2.4}
Let $G$ be a topological group which is not strongly amenable and let
$(G,\Pi(G))$ be its universal minimal proximal flow; then the action of $G$ is effective at
every $t\in G$ with $\mathrm{cls}_G^{}\langle\mathrm{Aut}(G)t\rangle=G$.
\end{thm}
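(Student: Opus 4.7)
The plan is to mirror the strategy behind Theorem~\ref{thm2.1}, but work inside the proximal universal flow and exploit the extra piece of structure provided by Furstenberg's result (D), namely the canonical lift $a\mapsto\hat{a}$ of $\mathrm{Aut}(G)$ into $\mathrm{Homeo}(\Pi(G))$ satisfying $\hat{a}(sx)=a(s)\hat{a}(x)$. Fix $t\in G$ with $\mathrm{cls}_G^{}\langle\mathrm{Aut}(G)t\rangle=G$ and argue by contradiction: assume that $\pi_t=\mathit{id}_{\Pi(G)}$. The goal is to show that this forces every element of $G$ to act trivially on $\Pi(G)$, and then use minimality together with non-strong-amenability to derive a contradiction.

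To carry this out, consider the ``ineffectivity kernel''
\[
S=\{s\in G\,|\,sx=x\text{ for all }x\in\Pi(G)\}.
\]
First, $S$ is a subgroup of $G$, and it is closed: for each $x\in\Pi(G)$ the set $\{s\in G\,|\,sx=x\}$ is closed by joint continuity of the action, and $S$ is the intersection of these sets over all $x\in\Pi(G)$. By the standing assumption $t\in S$.

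The crucial step is to show that $S$ is invariant under $\mathrm{Aut}(G)$. Given $s\in S$ and $a\in\mathrm{Aut}(G)$, the equivariance from (D) yields, for any $y\in\Pi(G)$, writing $y=\hat{a}(x)$ with $x=\hat{a}^{-1}(y)\in\Pi(G)$,
\[
a(s)\,y=a(s)\hat{a}(x)=\hat{a}(sx)=\hat{a}(x)=y,
\]
so $a(s)\in S$. This is where Furstenberg's construction really pulls its weight; everything else in the argument is formal. Because $S$ is an $\mathrm{Aut}(G)$-invariant closed subgroup of $G$ containing $t$, it contains $\mathrm{Aut}(G)t$, hence the subgroup $\langle\mathrm{Aut}(G)t\rangle$, and by closedness also $\mathrm{cls}_G^{}\langle\mathrm{Aut}(G)t\rangle=G$. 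Therefore $S=G$.

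Finally, $S=G$ means that $G$ acts trivially on $\Pi(G)$; but then every subset of $\Pi(G)$ is $G$-invariant, so minimality forces $\Pi(G)$ to be a singleton. This contradicts the hypothesis that $G$ is not strongly amenable (by the characterization recalled after item~(C)), completing the proof. The main obstacle, as noted, is verifying $\mathrm{Aut}(G)$-invariance of $S$; once the Furstenberg intertwining relation is in place this is a one-line check, and the rest of the argument is a direct transcription of the scheme used for Theorem~\ref{thm2.1}.
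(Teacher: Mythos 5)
Your proof is correct and is essentially the paper's (omitted) argument: the paper proves Theorem~\ref{thm2.4} exactly as Theorem~\ref{thm2.1}, using the equivariant homeomorphisms $\hat{a}$ (constructed via universality of $\Pi(G)$ and Lemma~\ref{lem4.1}, i.e.\ statement (D)) to transport the hypothetical identity action of $t$ to all of $\mathrm{cls}_G^{}\langle\mathrm{Aut}(G)t\rangle=G$, and then invoking minimality and non-strong-amenability. Your packaging of the density step through the closed $\mathrm{Aut}(G)$-invariant ineffectivity kernel is only a cosmetic variant of that same argument.
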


\begin{cor}\label{cor2.5}
Let $G$ be a
characteristically simple
group which is not strongly amenable; then $G$ acts effectively on $\Pi(G)$.
\end{cor}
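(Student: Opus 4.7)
The plan is to derive this corollary as an immediate consequence of Theorem~\ref{thm2.4} combined with Definition~\ref{def1.1}. There is essentially no work beyond unwinding definitions.

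First I would fix an arbitrary $t \in G$ with $t \neq e$, and aim to show that the flow $(G,\Pi(G))$ is effective at $t$, i.e., $\widehat{\sigma_{\!t}} \neq \mathit{id}_{\Pi(G)}$. By hypothesis $G$ is characteristically simple, so Definition~\ref{def1.1} gives that $\langle \mathrm{Aut}(G)t\rangle$ is dense in $G$; equivalently $\mathrm{cls}_G^{}\langle \mathrm{Aut}(G)t\rangle = G$. This is exactly the hypothesis of Theorem~\ref{thm2.4}, which, since $G$ is assumed not strongly amenable, yields that $(G,\Pi(G))$ is effective at $t$.

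Since this argument applies to every nontrivial $t$, the action of $G$ on $\Pi(G)$ is effective, as required. The only real substance is already packaged in Theorem~\ref{thm2.4}; so the sole ``obstacle'' here is verifying that the density condition in Definition~\ref{def1.1} literally matches the hypothesis on $t$ in Theorem~\ref{thm2.4}, which it does verbatim. Consequently the corollary should be stated and proved in a single short paragraph, with no further machinery invoked.
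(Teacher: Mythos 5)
Your proposal is correct and coincides with the paper's (implicit) argument: Corollary~\ref{cor2.5} is indeed obtained by applying Theorem~\ref{thm2.4} at each $t\neq e$, where the characteristic simplicity of Definition~\ref{def1.1} supplies exactly the hypothesis $\mathrm{cls}_G^{}\langle\mathrm{Aut}(G)t\rangle=G$. Nothing further is needed.
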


Recall that
a topological group has the \textit{Rohlin property} if it has a dense conjugacy class
in $G \setminus \{e\}$, and the \textit{strong Rohlin property} if it has a co-meager conjugacy class in $G \setminus \{e\}$ (cf.~\cite[Definition~3.3]{GW}).
To illustrate the subject, we give here several examples of
Polish (non-locally compact) groups that have the Rohlin property:
\begin{itemize}
\item[(1)] The group $\mathrm{Aut}(X,\mathscr{X},\mu)$ of measure-preserving automorphisms of a standard measure space $(X,\mathscr{X},\mu)$ equipped with the (Polish) weak topology.

\item[(2)] The group $U(H)$ of unitary operators on a separable infinite-dimensional Hilbert space $H$ equipped with the strong operator topology.

\item[(3)] The group of homeomorphisms of the Cantor set and the group of homeomorphisms
of the Hilbert cube $[-1,1]^\mathbb{N}$, equipped with the topology of uniform convergence.
\end{itemize}
Examples of groups with the strong Rohlin property are:
\begin{enumerate}
\item[(4)] The group $S_\infty(\mathbb{N})$ of all permutations of a countable set with the topology of pointwise convergence;

\item[(5)] The group $H_+[0,1]$ of order preserving homeomorphisms of the unit interval;

\item[(6)] The group $H(X)$ of homeomorphisms of a Cantor set.
\end{enumerate}
We will say that a topological group has the \textit{characteristic Rohlin property} if it has
a dense ${\mathrm{Aut}(G)}$ orbit in $G \setminus \{e\}$ (i.e. there is some $g \in G$ with ${\mathrm{Aut}(G)} g$ dense in in $G \setminus \{e\}$),
and the \textit{characteristic strong Rohlin property} if it has
a co-meager
${\mathrm{Aut}}(G)$ orbit in $G \setminus \{e\}$.

Clearly, the (strong) Rohlin property implies the characteristic (strong) Rohlin property.
However, they are conceptually different.
For instance, the abelian group $\mathbb{Q}$ has the strong characteristic Rohlin property.

By Theorem~\ref{thm2.4}, we can easily obtain the following:

\begin{cor}\label{cor2.6}
Let $G$ be a topological group which is not strongly amenable.
If $G$ has the characteristic (strong) Rohlin property, then there is a (co-meager) dense set $E$ of $G\setminus\{e\}$ such that $G$ acts effectively on $\Pi(G)$ at each $t\in E$.
\end{cor}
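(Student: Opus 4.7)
The plan is to derive Corollary~\ref{cor2.6} as a direct application of Theorem~\ref{thm2.4} by exhibiting the required witness set explicitly. Using the characteristic (resp.\ strong) Rohlin property, I would fix some $g_0 \in G \setminus \{e\}$ whose $\mathrm{Aut}(G)$-orbit
\[
E := \mathrm{Aut}(G) g_0
\]
is dense (resp.\ co-meager) in $G \setminus \{e\}$.

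The first key step is to observe that the hypothesis of Theorem~\ref{thm2.4} holds not just at $g_0$ but at every point of $E$. Indeed, for any $t \in E$, write $t = a(g_0)$ with $a \in \mathrm{Aut}(G)$; since $\mathrm{Aut}(G)$ is a group, $\mathrm{Aut}(G) t = \mathrm{Aut}(G)\, a(g_0) = \mathrm{Aut}(G) g_0 = E$, so $\mathrm{Aut}(G) t$ is itself dense in $G \setminus \{e\}$. It follows that $\mathrm{cls}_G \mathrm{Aut}(G) t \supseteq G \setminus \{e\}$, and since the subgroup $\langle \mathrm{Aut}(G) t \rangle$ automatically contains $e$, we obtain $\mathrm{cls}_G \langle \mathrm{Aut}(G) t \rangle = G$.

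Next, since $G$ is not strongly amenable, Theorem~\ref{thm2.4} applies at every such $t \in E$ and yields that the action of $G$ on $\Pi(G)$ is effective at $t$. As $E$ is by construction dense (resp.\ co-meager) in $G \setminus \{e\}$, this gives exactly the conclusion. There is essentially no obstacle: the whole argument is a short bookkeeping step reducing the corollary to Theorem~\ref{thm2.4}, the only mild point being the automatic inclusion of $e$ in $\langle \mathrm{Aut}(G) t \rangle$ needed to promote density in $G \setminus \{e\}$ to the equality $\mathrm{cls}_G \langle \mathrm{Aut}(G) t \rangle = G$.
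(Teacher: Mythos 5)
Your proposal is correct and is exactly the short derivation the paper intends: the paper gives no separate proof of Corollary~\ref{cor2.6}, stating only that it follows easily from Theorem~\ref{thm2.4}, and your argument — taking $E=\mathrm{Aut}(G)g_0$, noting $\mathrm{Aut}(G)t=E$ for every $t\in E$ since $\mathrm{Aut}(G)$ is a group, and promoting density in $G\setminus\{e\}$ to $\mathrm{cls}_G\langle\mathrm{Aut}(G)t\rangle=G$ because the generated subgroup contains $e$ — is precisely that easy reduction. Nothing is missing.
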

See Corollary~\ref{cor4.6} for
a related freeness criterion.

\subsection{An isomorphism theorem of universal minimal proximal flows}\label{sec2.3}

The following result
strengthens a theorem of
Furstenberg (see \cite[Theorem~II.4.4]{G76}).

\begin{thm}\label{thm2.7}
Let $G$ be a topological group and $S$ a closed subgroup of finite index in $G$. Then $(S,\Pi(S))$ can be extended to $(G,\Pi(S))$ so that the flows $(G,\Pi(G))$ and $(G,\Pi(S))$ are isomorphic.
Also, $(G,\Pi(G))$ is
free
if and only if so is $(G,\Pi(S))$.
\end{thm}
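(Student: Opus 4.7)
My plan is to first establish that the restriction $(S,\Pi(G))$ of the universal minimal proximal $G$-flow to the finite-index subgroup $S$ is itself minimal and proximal as an $S$-flow. The universal property of $\Pi(S)$ then furnishes an $S$-factor $\phi\colon\Pi(S)\to\Pi(G)$; by extending the $S$-action on $\Pi(S)$ to a $G$-action in a canonical way the map $\phi$ becomes $G$-equivariant, and the universal property of $\Pi(G)$ supplies an inverse $G$-homomorphism which, together with $\phi$, exhibits the required $G$-isomorphism. The freeness assertion then follows immediately since isomorphic $G$-flows have identical stabilizers at every point.

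For $S$-minimality of $(S,\Pi(G))$, I would decompose $\Pi(G)=Y_1\sqcup\cdots\sqcup Y_k$ into finitely many $S$-minimal closed subsets (finite by $[G:S]<\infty$), observe that $G$ permutes the $Y_i$, and conclude that the induced map $\Pi(G)\to\{Y_1,\dots,Y_k\}$ is a $G$-equivariant factor onto a finite Hausdorff $G$-set. A proximal minimal flow cannot admit a non-trivial finite Hausdorff factor (any two proximal points collapse to the same image), so $k=1$. For $S$-proximality, given $x,y\in\Pi(G)$ and a net $t_\alpha\in G$ with $t_\alpha x,t_\alpha y\to z$, I would write $G=\bigsqcup_{j=1}^{n}g_jS$ and decompose $t_\alpha=g_{j_\alpha}s_\alpha$ with $s_\alpha\in S$; passing to a subnet so that $g_{j_\alpha}\equiv g_j$ is constant, continuity of the action of $g_j$ yields $s_\alpha x\to g_j^{-1}z$ and $s_\alpha y\to g_j^{-1}z$, witnessing $S$-proximality.

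The extension of the $S$-action on $\Pi(S)$ to a $G$-action I would carry out in the spirit of Section~\ref{Comm} (which the present theorem is a special case of, since $\mathrm{Comm}_G(S)=G$ whenever $[G:S]<\infty$). For each $g\in G$, the conjugation $\sigma_g\colon s\mapsto gsg^{-1}$ restricts to an isomorphism between the finite-index subgroups $S_g:=S\cap g^{-1}Sg$ and $S\cap gSg^{-1}$ of $S$. Iterating the previous paragraph's argument one level deeper shows that $(S_g,\Pi(S))$ is minimal proximal as an $S_g$-flow; combining the universal property of $\Pi(S_g)$ with the isomorphism $\sigma_g$, in the manner of result~(D), yields a homeomorphism $\widehat{\sigma_g}$ of $\Pi(S)$ satisfying $\widehat{\sigma_g}(sx)=(gsg^{-1})\widehat{\sigma_g}(x)$ for every $s\in S_g$. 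Setting $g\cdot x:=\widehat{\sigma_g}(x)$ then defines a $G$-action on $\Pi(S)$ extending the $S$-action, for which $(G,\Pi(S))$ is a minimal proximal $G$-flow and $\phi$ is $G$-equivariant.

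The principal obstacle lies in the extension step: because $S$ need not be normal in $G$, the conjugations $\sigma_g$ are only partial automorphisms of $S$ (isomorphisms between finite-index subgroups rather than automorphisms of $S$ itself), so result~(D) is not directly applicable. Making the family $\{\widehat{\sigma_g}\}_{g\in G}$ well-defined and checking that its members compose correctly to yield a genuine $G$-action, rather than just a projective one, is exactly where the commensurator-style construction of Section~\ref{Comm} is needed, and where the uniqueness features of universal minimal proximal flows must be invoked systematically.
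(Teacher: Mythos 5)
Your outline stops short of proving the one step on which everything hinges: the construction of a well-defined $G$-action on $\Pi(S)$. You propose to build homeomorphisms $\widehat{\sigma_g}$ from the partial conjugations $\sigma_g\colon S\cap g^{-1}Sg\to S\cap gSg^{-1}$ ``in the spirit of Section~\ref{Comm}'', but you yourself flag the well-definedness of this family and the verification that $g\mapsto\widehat{\sigma_g}$ is a genuine homomorphism (not merely a projective one) as the principal obstacle --- and that is exactly the content of the theorem, not a detail. The paper's resolution is different and is the missing idea: by Lemma~\ref{lem5.2} one passes to a \emph{closed normal} subgroup $N\le S$ of finite index in $G$, on which every $\sigma_t\!\upharpoonright_N$, $t\in G$, is a genuine automorphism, so the canonical map of (D) gives an honest action $\zeta$ of $G$ on $\Pi(N)$; Lemma~\ref{lem5.3} then proves joint continuity of $\zeta$ (via the clopen coset decomposition --- a point your sketch never addresses, and which is essential since $G$ is a topological, not discrete, group); finally the rigidity Lemma~\ref{lem4.1} (no nontrivial endomorphisms of a minimal proximal flow), combined with Lemma~\ref{lem5.1} and the universality of $(G,\Pi(G))$, identifies $(G,\Pi(G))\cong(G,\Pi(N))$ and $(N,\Pi(N))\cong(N,\Pi(S))$, and the $G$-action is transported to $\Pi(S)$ through these isomorphisms. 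Appealing instead to the commensurator theorem would not close the gap: its proof uses the same normal-core device, it carries a countability hypothesis absent from Theorem~\ref{thm2.7}, it ignores continuity, and it does not by itself yield that $(G,\Pi(S))$ with the extended action is isomorphic to $(G,\Pi(G))$.

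Two further points need repair. In your argument that $(S,\Pi(G))$ is minimal you assert that $G$ permutes the $S$-minimal pieces $Y_1,\dotsc,Y_k$; this is false for non-normal $S$ in general ($gY_i$ is $gSg^{-1}$-minimal, not $S$-minimal), so the finite quotient you invoke need not be $G$-equivariant --- the fix is again to run the argument with the normal core $N$ (or simply to quote Lemma~\ref{lem5.1}, as the paper does); the existence and finiteness of the decomposition also deserves a word. Finally, the assertion that the $S$-factor map $\phi\colon\Pi(S)\to\Pi(G)$ ``becomes $G$-equivariant'' once the action is extended is stated without proof; in the paper the $G$-isomorphism is obtained the other way around, from the universality of $(G,\Pi(G))$ applied to the flow $(G,\Pi(N))$ together with the endomorphism rigidity, and some such argument is required here too. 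The freeness claim via transport of stabilizers is fine once the isomorphism of $G$-flows is actually established.
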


We will prove this theorem in $\S\ref{sec5}$ by making use of some arguments established in \cite{G76}.
\subsection{Universal minimal strongly proximal flows}
If a topological group $G$ acts on $Q$ which is a compact convex subset of a locally convex topological vector space and if each $t\in G$ acts as an affine transformation, then we say that $(G,Q)$ is an \textit{affine flow}. See \cite{G76, Aus}.

Let $\mathcal{M}(X)$ be the set of all \textit{quasi-regular} Borel probability measures on the compact T$_2$-space $X$, which is compact and convex under the weak-* topology.
Given any flow $(T,X,\pi)$, let $(T,\mathcal{M}(X),\pi_*)$ be the naturally induced affine flow on
$\mathcal{M}(X)$.
See \cite[p.~31]{G76}.

A flow $(T,X,\pi)$ is said to be \textit{strongly proximal} if the induced affine flow $(T,\mathcal{M}(X),\pi_*)$ is proximal~\cite[p.~161]{G75} and \cite[p.~31]{G76}. It is easy to see that a subflow of a strongly proximal flow is strongly proximal and every strongly proximal flow is also proximal.
As observed in \cite[p.~32]{G76} the diagonal-wise product of strongly proximal flows is strongly proximal.
Here is a precise statement and a short proof:

\begin{itemize}
\item[(E)] \textit{Strong proximality
is preserved under diagonal-wise product of any cardinality.}

\begin{proof}
Let $\{(G,X_i)\}_{i\in I}$ be a family of strongly proximal flows and set $X=\prod_{i\in I}X_i$ and $\Pr_i\colon x=(x_i)_{i\in I}\mapsto x_i$.
Let $\mathfrak{m}\in\mathcal{M}(X)$ be any Borel probability measure. Then by \cite[Lemma~III.1.1]{G76}, we may suppose that $\mathfrak{m}$ belongs to a minimal set $M$ of $(G,\mathcal{M}(X))$.
Since $\Pr_i(M)$ is minimal for $(G,\mathcal{M}(X_i))$ and $(G,\mathcal{M}(X_i))$ is proximal,
it follows that
$\Pr_i(M)\subseteq X_i$ for all $i\in I$. This implies that $\Pr_i(\mathfrak{m})$ is a point mass for any $i\in I$ and thus $\mathfrak{m}$ itself is a point mass.
This proves the statement.
\end{proof}
\end{itemize}
Hence as in (C) before one proves that
\begin{itemize}
\item[(F)] \textit{Associated to any topological group $G$, there exists a unique, up to an isomorphism, universal minimal strongly proximal flow.} We will denote this flow by $(G,\Pi_s(G))$. See \cite[p.~32]{G76}.
\end{itemize}
In the same way, Theorem~\ref{thm2.3} and Theorem~\ref{thm2.7} can be restated for $\Pi_s$ instead of $\Pi$.
Moreover, by using the characterizations of amenable group
given in ~\cite[Theorem~III.3.1]{G76}, we can restate Theorem~\ref{thm2.4}, Corollary~\ref{cor2.5} and Corollary~\ref{cor2.6}, respectively, as follows.

\begin{thm}\label{thm2.8}
Let $G$ be a topological group which is not amenable and let
$(G,\Pi_s(G))$ be its universal minimal strongly proximal flow. Then $G$ acts effectively on $\Pi_s(G)$ at
every $t\in G$ with $\mathrm{cls}_G^{}\langle\mathrm{Aut}(G)t\rangle=G$.
\end{thm}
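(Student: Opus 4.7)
The plan is to mirror the proof of Theorem~\ref{thm2.4}, substituting the universal minimal strongly proximal flow for the universal minimal proximal one. Two inputs from \cite{G76} are required: first, the strongly-proximal analog of statement~(D), namely that for every $a\in\mathrm{Aut}(G)$ there is a homeomorphism $\hat{a}\colon\Pi_s(G)\to\Pi_s(G)$ satisfying
\begin{equation*}
\hat{a}(tx)=a(t)\hat{a}(x)\qquad(\forall\,t\in G,\,x\in\Pi_s(G)),
\end{equation*}
with $\widehat{\sigma_t}\colon x\mapsto tx$; this is obtained exactly as in~\cite[Proposition~II.4.3]{G76} by transporting the action along $a$ (the resulting flow is still minimal strongly proximal, as strong proximality is a property of the image of $G$ in $\mathrm{Homeo}(\Pi_s(G))$) and then invoking universality. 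Second, the characterization $G$ is amenable if and only if $\Pi_s(G)$ is a singleton, which is~\cite[Theorem~III.3.1]{G76}.

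With these tools in hand, fix $t\in G$ with $\mathrm{cls}_G\langle\mathrm{Aut}(G)t\rangle=G$ and suppose, toward a contradiction, that $t$ acts as the identity on $\Pi_s(G)$. Then for every $a\in\mathrm{Aut}(G)$ and every $x\in\Pi_s(G)$, using the intertwining identity with $y=\hat{a}^{-1}(x)$,
\begin{equation*}
a(t)\cdot x=a(t)\hat{a}(y)=\hat{a}(ty)=\hat{a}(y)=x,
\end{equation*}
so $a(t)$ also acts trivially on $\Pi_s(G)$.

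Now consider the kernel of the action,
\begin{equation*}
K=\{s\in G:sx=x\text{ for all }x\in\Pi_s(G)\}.
\end{equation*}
This is a subgroup, and by joint continuity of the action it is closed in $G$ (it equals $\bigcap_{x\in\Pi_s(G)}\pi_{\cdot}^{-1}(x)\cap(\cdot,x)$-type fibers). The previous paragraph shows $\mathrm{Aut}(G)t\subseteq K$, hence $\langle\mathrm{Aut}(G)t\rangle\subseteq K$, and by closedness $G=\mathrm{cls}_G\langle\mathrm{Aut}(G)t\rangle\subseteq K$. Thus $G$ acts trivially on $\Pi_s(G)$; minimality then forces $\Pi_s(G)$ to be a single point. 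By the strongly-proximal characterization of amenability, $G$ is amenable, contradicting the hypothesis.

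The main conceptual step, and the one that warrants a careful check, is the analog of~(D) for $\Pi_s(G)$: one must verify that the automorphism-twisted action remains minimal strongly proximal, which is what guarantees the existence of $\hat{a}$ via universality. Once this is in place, the remainder is a transparent transcription of the proof of Theorem~\ref{thm2.4}, with the role of strong amenability replaced by amenability via~\cite[Theorem~III.3.1]{G76}.
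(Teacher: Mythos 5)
Your proposal is correct and follows essentially the same route as the paper: the paper proves Theorem~\ref{thm2.8} by transcribing the argument of Theorem~\ref{thm2.1}/\ref{thm2.4} (twist the action by $a\in\mathrm{Aut}(G)$, obtain $\hat a$ from universality and the rigidity of endomorphisms of minimal proximal flows, deduce that $a(t)$ acts trivially for all $a$, and use density of $\langle\mathrm{Aut}(G)t\rangle$ together with the characterization of amenability via $\Pi_s(G)=pt$ from \cite[Theorem~III.3.1]{G76}), and your closed-kernel formulation of the last step is just a repackaging of the paper's finite-products-plus-density argument. The only blemish is the garbled parenthetical purporting to justify closedness of the kernel; the correct (and standard) justification is simply $K=\bigcap_{x\in\Pi_s(G)}\{s\in G: sx=x\}$, an intersection of closed sets by continuity of $s\mapsto sx$.
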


\begin{cor}\label{cor2.9}
Let $G$ be a
characteristically simple
group which is not amenable; then $G$ acts effectively on $\Pi_s(G)$.
\end{cor}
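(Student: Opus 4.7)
The plan is to deduce this corollary directly from Theorem~\ref{thm2.8}, exactly in the same way that Corollary~\ref{cor2.2} follows from Theorem~\ref{thm2.1} and Corollary~\ref{cor2.5} from Theorem~\ref{thm2.4}. Concretely, I would fix an arbitrary $t \in G$ with $t \ne e$ and show that the action of $G$ on $\Pi_s(G)$ is effective at $t$; since $t$ is arbitrary this is precisely the definition of effectiveness.

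First, by the hypothesis that $G$ is characteristically simple (Definition~\ref{def1.1}), the subgroup $\langle \mathrm{Aut}(G)t\rangle$ generated algebraically by the $\mathrm{Aut}(G)$-orbit of $t$ is dense in $G$, i.e.
\[
\mathrm{cls}_G^{}\langle\mathrm{Aut}(G)t\rangle=G.
\]
This is exactly the condition required to invoke Theorem~\ref{thm2.8}. Second, by the assumption that $G$ is not amenable, the hypothesis of Theorem~\ref{thm2.8} is met as well. Applying that theorem we conclude that $(G,\Pi_s(G))$ is effective at $t$, meaning $\pi_t \neq \textit{id}_{\Pi_s(G)}$. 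Since $t \ne e$ was arbitrary, the action of $G$ on its universal minimal strongly proximal flow is effective.

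There is essentially no obstacle here beyond verifying that Theorem~\ref{thm2.8} applies; the substantive work has already been packaged into that theorem (which in turn rests on the amenability characterization from \cite[Theorem~III.3.1]{G76} and the diagonal-product argument in~(E) above). The only thing to double-check is that the definition of characteristically simple from Definition~\ref{def1.1} is applied pointwise to every $t \ne e$, so that Theorem~\ref{thm2.8} furnishes effectiveness at every non-identity element simultaneously, yielding global effectiveness.
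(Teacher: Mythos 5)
Your proposal is correct and matches the paper's (implicit) argument exactly: the corollary is an immediate consequence of Theorem~\ref{thm2.8}, since characteristic simplicity gives $\mathrm{cls}_G^{}\langle\mathrm{Aut}(G)t\rangle=G$ for every $t\neq e$, and non-amenability supplies the remaining hypothesis. The paper offers no separate proof beyond this observation, so there is nothing further to compare.
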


\begin{cor}\label{cor2.10}
Let $G$ be a topological group which is not amenable.
If $G$ has the characteristic (strong) Rohlin property, then there is a (co-meager) dense set $E$ of $G\setminus\{e\}$ such that $G$ acts effectively on $\Pi_s(G)$ at each $t\in E$.
\end{cor}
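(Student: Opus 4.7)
The plan is to deduce this directly from Theorem~\ref{thm2.8} combined with an unpacking of the characteristic (strong) Rohlin property, mimicking the argument for Corollary~\ref{cor2.6}. The characteristic Rohlin property furnishes an element $g\in G\setminus\{e\}$ whose orbit $\mathrm{Aut}(G)g$ is dense in $G\setminus\{e\}$; in the strong version this orbit is moreover co-meager in $G\setminus\{e\}$. I would simply take $E:=\mathrm{Aut}(G)g$.

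The bulk of the argument is then to verify the hypothesis of Theorem~\ref{thm2.8} for every $t\in E$. If $t=a(g)$ for some $a\in\mathrm{Aut}(G)$, then $\mathrm{Aut}(G)t=\mathrm{Aut}(G)g$, since $\mathrm{Aut}(G)$ acts on itself by composition on the right. Consequently $\mathrm{Aut}(G)t$ is dense in $G\setminus\{e\}$, and hence
\begin{equation*}
\mathrm{cls}_G^{}\langle\mathrm{Aut}(G)t\rangle\supseteq\mathrm{cls}_G^{}\mathrm{Aut}(G)t\supseteq G\setminus\{e\}.
\end{equation*}
Since $\langle\mathrm{Aut}(G)t\rangle$ is a subgroup it contains $e$, and thus $\mathrm{cls}_G^{}\langle\mathrm{Aut}(G)t\rangle=G$. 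Theorem~\ref{thm2.8} now yields that $(G,\Pi_s(G))$ is effective at $t$, since $G$ is not amenable.

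Finally, the set $E=\mathrm{Aut}(G)g$ is dense (respectively co-meager) in $G\setminus\{e\}$ directly from the definition of the characteristic (respectively strong characteristic) Rohlin property, completing the proof.

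There is no real obstacle here; the work has already been done in Theorem~\ref{thm2.8}. The only minor subtlety is the passage $\mathrm{cls}_G^{}\mathrm{Aut}(G)t\subseteq G\setminus\{e\}\leadsto\mathrm{cls}_G^{}\langle\mathrm{Aut}(G)t\rangle=G$, which in the potentially awkward case where $G$ is discrete (so that $G\setminus\{e\}$ is closed and $e$ is isolated) is handled precisely by the algebraic group generated, rather than the orbit itself, appearing in the hypothesis of Theorem~\ref{thm2.8}.
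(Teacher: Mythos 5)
Your proof is correct and is essentially the argument the paper intends: the corollary is stated as an immediate consequence of Theorem~\ref{thm2.8}, taking $E$ to be the dense (co-meager) $\mathrm{Aut}(G)$-orbit and noting that each $t$ in it satisfies $\mathrm{cls}_G^{}\langle\mathrm{Aut}(G)t\rangle=G$. Your explicit observation that $\mathrm{Aut}(G)t=\mathrm{Aut}(G)g$ for $t=a(g)$, and the remark about the discrete case being handled by passing to the generated subgroup, are exactly the right (and only) details to check.
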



In the following table we collect the
corollaries \ref{cor2.2}, \ref{cor2.5} and \ref{cor2.9}, all of which hold under the assumption that $G$
is characteristically simple.
%
%

\begin{table}[h]
\begin{center}
\begin{tabular}{ | l | l| l | l | l | l | }
\hline
$\mathrm{M}(G) \not = pt$  &
$G$ is not extremely amenable & $G$ is effective on $\mathrm{M}(G)$ \\
\hline
$\Pi(G) \not = pt$ & $G$ is not strongly amenable
& $G$ is effective on $\Pi(G)$\\
\hline
$\Pi_s(G) \not = pt$ & $G$ is not amenable
& $G$ is effective on $\Pi_s(G)$\\
\hline
\end{tabular}

\vspace{.2cm}
\caption{Effective actions of a characteristically simple group}
\end{center}
\end{table}

\subsection{Universal irreducible affine flows}\label{sec2.5}
We now consider the affine flow $(G,\mathcal{M}(\Pi_s(G)))$ induced on the compact convex set
$\mathcal{M}(\Pi_s(G))$ of quasi-regular Borel
probability measures on the universal minimal strongly proximal flow $(G,\Pi_s(G))$.
Recall that an affine flow $(G,Q)$ is \textit{irreducible} if it contains no proper non-empty closed convex invariant subset~\cite{G76}.




%

Recall that an irreducible affine flow $(G,Q)$ is called a \textit{universal irreducible affine flow} of $G$ if for every irreducible affine flow $(T,Q^\prime)$ there exists an affine homomorphism
$(G,Q)\xrightarrow{\phi}(G,Q^\prime)$.
\begin{itemize}
\item[(G)] {\it For any topological group $G$, $(G,\mathcal{M}(\Pi_s(G)))$ is the (unique) universal irreducible affine flow.} It is strongly proximal and contains $\Pi_s(G)$ (identified with the
collection of Dirac measures, or equivalently the closed set of its extremal points) as its unique minimal set
 See \cite[Proposition~III.2.4]{G76}.
\end{itemize}

%
%
%
%

Denote by $\mathrm{AHomeo}(\mathcal{M}(\Pi_s(G)))$ the group of all affine homeomorphisms of
$\mathcal{M}(\Pi_s(G))$ onto itself.
In view of this result and because an action on $\Pi_s(G)$ determines the action on $\mathcal{M}(\Pi_s(G))$
we immediately deduce the following:

\begin{thm}\label{thm2.11}
There is a homomorphism $a\mapsto\hat{a}$ of
$\mathrm{Aut}(G)$ into $\mathrm{AHomeo}(\mathcal{M}(\Pi_s(G)))$
which, for $t\in G$,
sends the inner-automorphism $\sigma_{\!t}\colon g\mapsto tgt^{-1}$
to the affine homeomorphism $\widehat{\sigma_{\!t}}\colon x\mapsto tx$ of
$\mathcal{M}(\Pi_s(G))$.
The flow $(G, \mathcal{M}(\Pi_s(G)))$ is effective if and only if the homomorphism
$$
t\mapsto\widehat{\sigma_{\!t}},
\qquad
G\to \mathrm{AHomeo}(\mathcal{M}(\Pi_s(G))),
$$
is one-to-one.
\end{thm}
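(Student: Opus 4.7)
The plan is to derive Theorem~\ref{thm2.11} as a direct consequence of the $\Pi_s$-analog of Furstenberg's result (D), combined with the universal property (G). The underlying observation is that the action of $G$, and more generally of $\mathrm{Aut}(G)$, on $\mathcal{M}(\Pi_s(G))$ is completely determined by the corresponding action on $\Pi_s(G)$ via pushforward of measures; so it suffices to build the homomorphism on $\Pi_s(G)$ and then transport it to $\mathcal{M}(\Pi_s(G))$.

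First I would establish the $\Pi_s$-analog of (D): a homomorphism $a\mapsto\hat a\colon\mathrm{Aut}(G)\to\mathrm{Homeo}(\Pi_s(G))$ satisfying $\hat a(tx)=a(t)\hat a(x)$ and sending the inner automorphism $\sigma_{\!t}$ to $x\mapsto tx$. Given $a\in\mathrm{Aut}(G)$, I would twist the $G$-action on $\Pi_s(G)$ to $t\cdot x:=a(t)x$; since $a$ is a topological automorphism of $G$, the resulting flow is still minimal and strongly proximal, and therefore by the uniqueness in (F) it is $G$-isomorphic to $(G,\Pi_s(G))$. Let $\hat a$ denote this isomorphism. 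It is unique because any flow endomorphism of the minimal proximal flow $(G,\Pi_s(G))$ must be the identity (if $\phi$ is such an endomorphism, then for any $x$ the pair $(x,\phi(x))$ is proximal and $\phi$ is $G$-equivariant, so $\phi$ fixes some point, hence by minimality $\phi=\mathrm{id}$). Uniqueness makes $a\mapsto\hat a$ a group homomorphism, and $\widehat{\sigma_{\!t}}(x)=tx$ follows by checking equivariance.

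Next I would lift each homeomorphism $\hat a$ of $\Pi_s(G)$ to an affine homeomorphism $\mu\mapsto\hat a_*\mu$ of $\mathcal{M}(\Pi_s(G))$ by pushforward. This lift is a continuous group homomorphism $\mathrm{Homeo}(\Pi_s(G))\to\mathrm{AHomeo}(\mathcal{M}(\Pi_s(G)))$, whose composition with $a\mapsto\hat a$ gives the required homomorphism of the theorem. Since the pushforward of $x\mapsto tx$ is exactly $\mu\mapsto t\mu$---which by definition is the $G$-action on $\mathcal{M}(\Pi_s(G))$ induced from $(G,\Pi_s(G))$---the inner automorphism $\sigma_{\!t}$ is sent to the affine homeomorphism $x\mapsto tx$ of $\mathcal{M}(\Pi_s(G))$, as required.

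Finally, the effectiveness equivalence is tautological once the construction is in place: by what has just been said, $t\in G$ acts on $\mathcal{M}(\Pi_s(G))$ precisely as the affine homeomorphism $\widehat{\sigma_{\!t}}$, so $t$ acts as the identity iff $\widehat{\sigma_{\!t}}=\mathrm{id}$, which means $(G,\mathcal{M}(\Pi_s(G)))$ is effective iff the map $t\mapsto\widehat{\sigma_{\!t}}$ is one-to-one. The main subtle point, and the only one that genuinely requires work, is the uniqueness of $\hat a$ used in the first step to ensure $a\mapsto\hat a$ is well-defined and multiplicative; this is the $\Pi_s$-analog of the argument used in \cite[Proposition~II.4.3]{G76}, transported here by way of the universal irreducible affine flow identified in (G).
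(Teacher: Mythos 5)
Your proposal is correct and follows essentially the paper's own route: the paper likewise deduces Theorem~\ref{thm2.11} immediately from the $\Pi_s$-analogue of (D) (the homomorphism $a\mapsto\hat a$ on $\Pi_s(G)$, built by twisting the action through $a$ and using that a minimal proximal flow admits no nontrivial endomorphisms) together with the observation, via (G), that an action on $\Pi_s(G)$ determines the induced affine action on $\mathcal{M}(\Pi_s(G))$, whence the effectiveness equivalence is immediate from $\widehat{\sigma_{\!t}}$ being the action of $t$. The only loose phrase is invoking ``uniqueness in (F)'' to identify the twisted flow with $(G,\Pi_s(G))$ --- strictly one should either note that the twisted flow is again universal, or (as your own uniqueness argument already permits) compose $\hat a$ with $\widehat{a^{-1}}$ and apply endomorphism rigidity to see both are isomorphisms --- but this is a one-line fix.
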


\begin{thm}\label{thm2.12}
Let $G$ be a topological group and $S$ a closed subgroup of finite index in $G$. Then $(S,\mathcal{M}(\Pi_s(S)))$ can be extended to $(G,\mathcal{M}(\Pi_s(S)))$ so that the flows $(G,\mathcal{M}(\Pi_s(G)))$ and $(G,\mathcal{M}(\Pi_s(S)))$ are isomorphic.
\end{thm}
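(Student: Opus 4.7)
The plan is to reduce Theorem~\ref{thm2.12} to the strongly proximal analogue of Theorem~\ref{thm2.7}, and then pass to induced affine flows via the pushforward functor $X\mapsto\mathcal{M}(X)$, which respects $G$-equivariance and sends flow isomorphisms to affine isomorphisms.

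First, as noted in the paragraph following statement (F), Theorem~\ref{thm2.7} admits a strongly proximal counterpart: for a closed subgroup $S$ of finite index in $G$, the $S$-action on $\Pi_s(S)$ extends to a $G$-action so that the resulting $G$-flow $(G,\Pi_s(S))$ is isomorphic to $(G,\Pi_s(G))$. Fix a $G$-equivariant homeomorphism $\varphi\colon\Pi_s(G)\to\Pi_s(S)$ implementing this isomorphism.

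Next, push forward along the $G$-action on $\Pi_s(S)$: for each $t\in G$, the homeomorphism $x\mapsto tx$ of $\Pi_s(S)$ induces the affine homeomorphism $\mu\mapsto t_*\mu$ of $\mathcal{M}(\Pi_s(S))$, giving a jointly continuous affine $G$-action. Because the restriction of this $G$-action to $S$ is the canonical $S$-action induced from $(S,\Pi_s(S))$, this furnishes the required extension $(G,\mathcal{M}(\Pi_s(S)))$ of $(S,\mathcal{M}(\Pi_s(S)))$. The pushforward $\varphi_*\colon\mathcal{M}(\Pi_s(G))\to\mathcal{M}(\Pi_s(S))$ is an affine homeomorphism, and $G$-equivariance of $\varphi$ gives $G$-equivariance of $\varphi_*$; hence the affine flows $(G,\mathcal{M}(\Pi_s(G)))$ and $(G,\mathcal{M}(\Pi_s(S)))$ are isomorphic.

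The sole substantive step is the strongly proximal version of Theorem~\ref{thm2.7}; this will be handled by running the argument of $\S\ref{sec5}$ with $\Pi_s$ in place of $\Pi$, which is permissible because strong proximality enjoys the same closure properties (subflows, products of arbitrary cardinality, via (E)) that are used there for proximality. Once that analogue is in hand, the remainder of the proof is a routine functoriality check and presents no real obstacle.
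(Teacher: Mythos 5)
Your argument is correct and coincides with the paper's own (implicit) proof: the paper obtains Theorem~\ref{thm2.12} precisely by combining the $\Pi_s$-version of Theorem~\ref{thm2.7} (asserted after statement (F)) with the remark preceding Theorem~\ref{thm2.11} that an action on $\Pi_s(\cdot)$ determines the induced affine action on $\mathcal{M}(\Pi_s(\cdot))$, which is exactly your pushforward step. The only point to state a bit more carefully is that rerunning \S\ref{sec5} for $\Pi_s$ needs, besides the closure properties you cite (which give the existence of $\Pi_s$), the strongly proximal analogue of Lemma~\ref{lem5.1}(2) --- restriction of a minimal strongly proximal flow to a closed finite-index subgroup remains minimal and strongly proximal, which follows easily from a coset decomposition --- while Lemma~\ref{lem4.1} applies verbatim since strongly proximal flows are proximal.
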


\section{The effectiveness of some universal minimal flows}\label{sec3}
This section is devoted to proving Theorem~\ref{thm2.1}.
Our argument is also useful for the effectiveness of the universal minimal semiflow associated to
characteristically simple
semigroups such as the additive semigroup $\mathbb{R}_+$ with the usual Euclidean topology.
See Theorem~\ref{thm3.6} below.

\subsection{Group actions}\label{sec3.1}


\begin{defn}\label{def-coal}
A flow $(G,X,\pi)$ is said to be \textit{coalescent} if every endomorphism of $(G,X,\pi)$ is an automorphism (cf., e.g.,~\cite[p.~115]{Aus}).
Similarly one defines the notion of coalescence for semiflows.
\end{defn}

%

The following result is due to Ellis (see \cite{E69}).

\begin{lem}\label{lem3.2}
The universal minimal flow of any topological group is coalescent.
\end{lem}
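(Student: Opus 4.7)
The plan is to show that every $G$-flow endomorphism $\phi\colon\mathrm{M}(G)\to\mathrm{M}(G)$ is a homeomorphism, and hence an automorphism; surjectivity will be immediate from minimality, and the real work is to prove injectivity via the Ellis semigroup.

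First I would observe that $\phi(\mathrm{M}(G))$ is a non-empty closed $G$-invariant subset of the minimal flow, so minimality forces $\phi(\mathrm{M}(G))=\mathrm{M}(G)$. Since $\mathrm{M}(G)$ is compact Hausdorff, a continuous bijection is automatically a homeomorphism, so the remaining goal is injectivity.

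The next step is to introduce the Ellis (enveloping) semigroup $E:=E(\mathrm{M}(G))$, a compact right-topological semigroup acting on $\mathrm{M}(G)$ and extending the $G$-action. A standard approximation argument in the pointwise-convergence topology on $E$ will then promote the $G$-equivariance of $\phi$ to $E$-equivariance:
\[
\phi(px)=p\,\phi(x)\quad\text{for all } p\in E,\ x\in\mathrm{M}(G).
\]
The argument is that for a net $g_\alpha\in G$ with $g_\alpha\to p$ in $E$, one has both $\phi(g_\alpha x)=g_\alpha\phi(x)\to p\phi(x)$ and $\phi(g_\alpha x)\to\phi(px)$ by continuity of $\phi$.

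From there I would fix a minimal left ideal $L\subseteq E$, a minimal idempotent $u\in L$, and a base point $x_0\in\mathrm{M}(G)$ with $ux_0=x_0$. Since $\mathrm{M}(G)=Ex_0$, one writes $\phi(x_0)=qx_0$ for some $q\in E$; using $\phi(ux_0)=u\phi(x_0)$ together with $qx_0=(qu)x_0$, one arranges $q\in uLu$. Invoking the classical Ellis structure theorem, $uLu$ is a group with identity $u$, so $q$ admits a two-sided inverse $q^{-1}\in uLu$. One then defines a candidate inverse $\psi(px_0):=pq^{-1}x_0$ and verifies that it is a $G$-equivariant continuous map $\mathrm{M}(G)\to\mathrm{M}(G)$; the relations $qq^{-1}=q^{-1}q=u$ and $ux_0=x_0$ then deliver $\psi\circ\phi=\mathrm{id}=\phi\circ\psi$.

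The main obstacle will be showing that $\psi$ is well-defined on $\mathrm{M}(G)$, i.e.\ that $px_0=p'x_0$ implies $pq^{-1}x_0=p'q^{-1}x_0$. This requires a careful analysis of the stabiliser of $x_0$ in $E$ relative to right multiplication by elements of the Ellis group $uLu$, and is the technical heart of the proof; it is a consequence of Ellis's structural analysis of minimal left ideals in compact right-topological semigroups.
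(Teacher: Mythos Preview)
The paper does not prove this lemma; it merely attributes it to Ellis and cites \cite{E69}. So there is no ``paper's own proof'' to compare against directly. Your sketch follows the classical Ellis-semigroup line, and most of the steps are fine: surjectivity via minimality, the extension of $G$-equivariance to $E$-equivariance of $\phi$, and locating a representative $q$ in the Ellis group $uL\ (=uLu)$ are all standard and correct.

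The genuine gap is exactly the one you flag at the end, and your proposed resolution is not sharp enough. As written, your argument uses nothing special about $\mathrm{M}(G)$: every step you list applies verbatim to an arbitrary minimal flow $(G,X)$ with $E=E(X)$. Since there exist minimal flows that are \emph{not} coalescent, the argument must break somewhere, and it breaks precisely at the well-definedness of $\psi(px_0)=pq^{-1}x_0$. The phrase ``Ellis's structural analysis of minimal left ideals'' does not by itself rescue this, because that structure theory holds in $E(X)$ for any $X$. What is actually needed is the specific fact that $\mathrm{M}(G)$ can be realised \emph{as} a minimal left ideal $I$ in a compact right-topological semigroup (the greatest ambit $S(G)$, or $\beta G$ in the discrete case). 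Once you identify points of $\mathrm{M}(G)$ with elements of $I$, right multiplication $x\mapsto x\alpha$ is automatically a well-defined continuous $G$-map on $I$; one computes $\phi(u)=u\phi(u)\in uI$, and since $I=Iu$ every $x\in I$ is $yu$ for some $y$, giving $\phi(x)=y\phi(u)$, so $\phi$ is right multiplication by the group element $\phi(u)\in uI$ and hence invertible. Equivalently, in your evaluation picture, the map $L\to \mathrm{M}(G)$, $p\mapsto px_0$, is a \emph{bijection} when $X=\mathrm{M}(G)$ and $ux_0=x_0$; this is where universality enters and is what fails for general minimal $X$.

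It is also worth noting that the paper does give a full proof of the semigroup analogue (Lemma~\ref{lem3.4}) by an entirely different method, using maximal almost periodic sets and the induced permutation automorphisms of $X^C$; that argument avoids the enveloping semigroup altogether and specialises to give an alternative proof of Lemma~\ref{lem3.2}.
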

%

\begin{proof}[Proof of Theorem~\ref{thm2.1}]
Let $(G,X,\pi)$ be the universal minimal flow of the topological group $G$ where $X$ is not one-point, namely, with $G$ not extremely amenable.

Write $\pi(t,x)=tx$ for all $t\in G$ and $x\in X$. Let $\textrm{Homeo}(X)$ be the group of
homeomorphisms of $X$ onto itself.
For each $a\in\mathrm{Aut}(G)$ define a flow $\alpha\colon G\times X\rightarrow X$ by $(t,x)\mapsto t\cdot_ax$, where
\begin{equation*}t\cdot_ax=a(t)x\quad\forall t\in G\textrm{ and }x\in X.\end{equation*}
Since $a(G)=G$, it follows that $(G,X,\alpha)$ is minimal and by the university of $(G,X,\pi)$ there is a homomorphism $(G,X,\pi)\xrightarrow{\hat{a}}(G,X,\alpha)$.
Clearly, $\hat{a}(tx)=a(t)\hat{a}(x)$ for every $t\in G$ and $x\in X$.

Now for $a^{-1}$ in place of $a\in\mathrm{Aut}(G)$, define similarly a minimal flow $(G,X,\alpha^{-1})$ and 
to obtain another homomorphism $(G,X,\pi)\xrightarrow{\widehat{a^{-1}}}(G,X,\alpha^{-1})$ such that
\begin{equation*}
\widehat{a^{-1}}(tx)=a^{-1}(t)\widehat{a^{-1}}(x)
\end{equation*}
for every $t\in G$ and $x\in X$.

Now,
the composition map $\hat{a}\circ\widehat{a^{-1}}$ satisfies
\begin{gather*}
\hat{a}\circ\widehat{a^{-1}}(tx)=\hat{a}\left(a^{-1}(t)\widehat{a^{-1}}(x)\right)=a(a^{-1}(t))\hat{a}\left(\widehat{a^{-1}}(x)\right)=t\hat{a}\circ\widehat{a^{-1}}(x),
\end{gather*}
so that $\hat{a}\circ\widehat{a^{-1}}$ is an endomorphism of $(G,X,\pi)$.
By Lemma~\ref{lem3.2}, it follows that $\hat{a}\in\textrm{Homeo}(X)$ for each $a\in\mathrm{Aut}(G)$.

Next, arguing by contradiction, assume that for some $t\in G$
with $\mathrm{cls}_G^{}\langle\mathrm{Aut}(G)t\rangle=G$,
$tx=x$ for all $x\in X$.
Then by the following commutative diagram:
\begin{equation*}
\begin{CD}
x@>>> tx\\
@V{\hat{a}}VV   @VV{\hat{a}}V\\
\hat{a}(x)@>>>t\cdot_a\hat{a}(x)
\end{CD}\quad \forall x\in X\textrm{ and }a\in\mathrm{Aut}(G),
\end{equation*}
it follows that
$$\hat{a}(tx) =a(t)\hat{a}(x)=\hat{a}(x)\quad \forall x\in X\textrm{ and }a\in\mathrm{Aut}(G). $$
Since $\hat{a}\in \textrm{Homeo}(X)$,  it follows that
$a(t)y =y$ for every $y \in X$.
Then, for any $n\ge1$
$$a_1(t)\dotsm a_n(t)y=y\quad \forall y\in X\textrm{ and }a_1,\dotsc,a_n\in\mathrm{Aut}(G).$$
Thus by $\mathrm{cls}_G^{}\langle\mathrm{Aut}(G)t\rangle=G$, it follows that $Gy=y$ and by minimality $X=\{y\}$. This contradicts the hypothesis that $X$ is not a singleton.

The proof of Theorem~\ref{thm2.1} is therefore completed.
\end{proof}


\begin{thm}\label{thm-free}
Let $G$ be a topological group such that
$\mathrm{Aut}(G)t$ is dense in $G\setminus\{e\}$ for all $t\in G$ with $t\not=e$. If $(G,\mathrm{M}(G))$ is
free
at some element $\tau\in G$, then $(G,\mathrm{M}(G))$ is
free.
\end{thm}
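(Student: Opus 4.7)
The plan is to reuse the $\hat{a}$ construction from the proof of Theorem~\ref{thm2.1}: for each $a\in\mathrm{Aut}(G)$ one obtains a homeomorphism $\hat{a}\in\mathrm{Homeo}(\mathrm{M}(G))$ satisfying $\hat{a}(tx)=a(t)\hat{a}(x)$ for all $t\in G$ and $x\in\mathrm{M}(G)$. The strategy is to transport a supposed fixed point of a nontrivial $t$ into a fixed point of the distinguished element $\tau$ by approximating $\tau$ through the $\mathrm{Aut}(G)$-orbit of $t$, and then derive a contradiction from the freeness at $\tau$.

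More concretely, I would fix $t\in G\setminus\{e\}$ arbitrarily and argue by contradiction, assuming that $tx_0=x_0$ for some $x_0\in\mathrm{M}(G)$. Since $\tau\neq e$ and $\mathrm{Aut}(G)t$ is dense in $G\setminus\{e\}$ by hypothesis, I choose a net $\{a_\lambda\}\subset\mathrm{Aut}(G)$ with $a_\lambda(t)\to\tau$. Applying the equivariance of $\hat{a_\lambda}$ at $x_0$ and using $tx_0=x_0$ gives
\begin{equation*}
\hat{a_\lambda}(x_0)=\hat{a_\lambda}(tx_0)=a_\lambda(t)\,\hat{a_\lambda}(x_0),
\end{equation*}
so each $a_\lambda(t)$ fixes the point $\hat{a_\lambda}(x_0)\in\mathrm{M}(G)$.

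By compactness of $\mathrm{M}(G)$, after passing to a subnet I may assume $\hat{a_\lambda}(x_0)\to y$ for some $y\in\mathrm{M}(G)$. Joint continuity of the action map $G\times\mathrm{M}(G)\to\mathrm{M}(G)$, together with $a_\lambda(t)\to\tau$, then forces $a_\lambda(t)\,\hat{a_\lambda}(x_0)\to\tau y$; but the left-hand side equals $\hat{a_\lambda}(x_0)$, which converges to $y$. Hence $\tau y=y$, contradicting the freeness of $(G,\mathrm{M}(G))$ at $\tau$, and the theorem follows.

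The only substantive ingredient is the existence of the equivariant homeomorphisms $\hat{a}$, which rests on Ellis's coalescence lemma (Lemma~\ref{lem3.2}) via the argument already carried out for Theorem~\ref{thm2.1}. Once that is granted, the remainder is a routine compactness-plus-continuity maneuver, and I do not anticipate any real obstacle beyond invoking the Theorem~\ref{thm2.1} machinery; the hypothesis on $\mathrm{Aut}(G)$-orbits enters only to produce the approximating net $a_\lambda(t)\to\tau$.
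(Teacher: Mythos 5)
Your proposal is correct and is essentially the paper's own proof: the same $\hat{a}$ machinery from Theorem~\ref{thm2.1} (via Lemma~\ref{lem3.2}), the same contradiction via a net $a_\lambda(t)\to\tau$ with $a_\lambda(t)$ fixing $\hat{a}_\lambda(x_0)$, and the same compactness-plus-joint-continuity passage to $\tau y=y$. Nothing further is needed.
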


\begin{proof}
Let $t$ be an arbitrary element of $G$ with $t \not=e$.
By contradiction, assume that $tx_0=x_0$ for some $x_0\in \mathrm{M}(G)$.
Since $\mathrm{Aut}(G)t$ is dense in $G\setminus\{e\}$, we can choose a net $\{a_n\}$ in
$\mathrm{Aut}(G)$ such that $a_n(t)\to\tau$
and $a_n(t)\hat{a}_n(x_0)=\hat{a}_n(x_0)$. Since $\mathrm{M}(G)$ is compact, then,
passing to a subnet if necessary, we may assume
$\hat{a}_n(x_0)\to y$ in $\mathrm{M}(G)$. Thus $\tau y=y$,
which contradicts the assumption that $(G,\mathrm{M}(G))$ is 
free
at $\tau$.
\end{proof}

\subsection{Universal minimal semiflows and effectiveness}\label{sec3.2}
Given any topological semigroup $T$, there exists a unique (up to isomorphism) \textit{universal minimal semiflow} of $T$, written as $(T,\textrm{M}(T))$, as in the group case, such that if $(T,X)$ is a minimal semiflow there is a homomorphism $(T,\textrm{M}(T))\xrightarrow{\phi}(T,X)$. For this we need the semigroup version of Lemma~\ref{lem3.2}.

\begin{lem}\label{lem3.4}
Let $(T,X,\pi)$ be a semiflow with $T$ a topological semigroup.
Then there is a cardinal number $\mathrm{a}$ and a minimal subset $M$ of $(T,X^\mathrm{a},\pi^\mathrm{a})$ such that $(T,M,\pi^\mathrm{a})$ is coalescent.
\end{lem}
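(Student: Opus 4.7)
The plan is to adapt Ellis's argument for the group case (Lemma~\ref{lem3.2}) to the semigroup setting. Since $T$ is only a topological semigroup we cannot appeal to a $\beta T$ acting invertibly, so instead I would construct $M$ as a minimal subsemiflow of a sufficiently large power of $X$ which enjoys a universal property among minimal subsemiflows of powers of $X$, and then derive coalescence from this universality by a diagonal/graph argument.

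The first step is a set-theoretic bookkeeping. Up to isomorphism there are only set-many minimal semiflows $(T,N)$ arising as subsemiflows of some power $(T,X^{\mathrm{b}},\pi^{\mathrm{b}})$: any such $N$ is determined by its family of continuous coordinate projections $N\to X$, and a standard cardinality bound (depending only on $|X|$ and the weight of $X$) controls this family. Choosing a representative $N_{i}$ from each isomorphism class and assembling the diagonal product $\prod_{i}N_{i}$ inside a power of $X$, Zorn's lemma applied to nonempty closed $T$-invariant subsets produces a minimal subsemiflow $M\subseteq(T,X^{\mathrm{a}},\pi^{\mathrm{a}})$ with the universal property that every minimal subsemiflow of any power of $X$ is a factor of $(T,M)$ via a suitable coordinate projection.

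Next I would prove coalescence of $(T,M,\pi^{\mathrm{a}})$. Let $\phi\colon M\to M$ be an endomorphism; minimality of $M$ forces $\phi(M)=M$, so $\phi$ is surjective. The graph $\Gamma_{\phi}=\{(m,\phi(m))\,|\,m\in M\}\subseteq M\times M$ is the continuous $T$-equivariant image of $M$ under $m\mapsto(m,\phi(m))$, hence a minimal subsemiflow of $M\times M$, which itself embeds in a power of $X$ (for infinite $\mathrm{a}$). By the universal property established above, there is an equivariant continuous surjection $\sigma=(\sigma_{1},\sigma_{2})\colon M\to\Gamma_{\phi}$ satisfying $\sigma_{2}=\phi\circ\sigma_{1}$. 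Iterating this device through the inverse system $M\xleftarrow{\phi}M\xleftarrow{\phi}\cdots$ and passing to a minimal subsemiflow of the inverse limit (which is again a factor of $M$ by universality), together with the Ellis--Numakura idempotent lemma applied in the compact right topological subsemigroup of $M^{M}$ generated by iterates, should yield an idempotent endomorphism $e$ of $M$; universality together with minimality force $e=\mathrm{id}_{M}$, from which one recovers a two-sided inverse of $\phi$ in $\mathrm{End}(T,M)$, so $\phi$ is an automorphism.

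The main obstacle will be the last paragraph: the idempotent provided by Ellis--Numakura lives in the closure of iterates of $\phi$ inside $M^{M}$ with its product topology, and this closure need not consist of continuous self-maps. In the group case Ellis circumvents the analogous issue by exploiting the minimal-ideal group structure of the enveloping semigroup. In the semigroup setting I expect the universality of $M$ to play this role instead, with some care needed to promote a pointwise-defined limiting idempotent to a genuine continuous $T$-equivariant endomorphism before invoking minimality to conclude it is the identity; alternatively, one can work directly with the inverse limit of the $\phi$-tower to produce the needed continuous section of $\phi$ and hence the injectivity that completes coalescence.
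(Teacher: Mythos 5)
Your first step (representatives of the isomorphism classes of minimal subsemiflows of powers of $X$, a diagonal product, and Zorn's lemma to extract a minimal set $M$ with the universal property) is fine, but it only delivers universality, and universality is not where the content of Lemma~\ref{lem3.4} lies. The heart of the lemma is coalescence, and your proof of it has a genuine gap which you yourself flag: the Ellis--Numakura idempotent is taken in the pointwise closure of $\{\phi^{n}\}$ in $M^{M}$, whose elements are $T$-equivariant but in general not continuous; such an idempotent $e$ is not an endomorphism of the semiflow, need not be surjective (pointwise limits of surjections need not be onto), and so cannot be forced to equal $\mathrm{id}_{M}$ by ``universality together with minimality.'' The fallback via the inverse limit of the $\phi$-tower also does not close the gap: projecting a minimal subset of $\varprojlim(M\xleftarrow{\phi}M\xleftarrow{\phi}\cdots)$ and composing with the universality surjection only produces endomorphisms $\alpha,\beta$ of $M$ with $\phi\circ\alpha=\beta$, which says nothing about injectivity of $\phi$. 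Note also that even in the group case coalescence of the universal minimal flow is not a formal consequence of universality --- Ellis's proof uses the algebraic structure of minimal left ideals in $\beta G$ (idempotents, the group structure of $uIu$), machinery that is exactly what is unavailable for a general topological semigroup; so one should not expect a purely ``universal object'' argument to work here.

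The paper's proof avoids all of this by a different, more elementary device: maximal almost periodic sets. One takes, via Zorn's lemma, a maximal almost periodic set $C\subseteq X$, lets $z\in X^{C}$ be an injective enumeration of $C$, and puts $M=\mathrm{cls}_{X^{C}}Tz$ (so $\mathrm{a}=|C|$). Every point $z'\in M$ again enumerates, injectively, a maximal almost periodic set. Given an endomorphism $\varphi$ of $(T,M)$, the point $(z,\varphi(z))$ is almost periodic in $M\times M$, so $\mathrm{range}\,z\cup\mathrm{range}\,\varphi(z)$ is an almost periodic set; maximality forces $\mathrm{range}\,\varphi(z)=\mathrm{range}\,z$, hence $\varphi(z)=\gamma^{*}(z)$ for a permutation $\gamma$ of the index set $C$, where $\gamma^{*}$ is the induced coordinate-permutation automorphism of $X^{C}$. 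Since $\varphi$ and $\gamma^{*}$ agree on the dense orbit $Tz$, they agree on $M$, so $\varphi$ is an automorphism. This argument never invokes enveloping semigroups, idempotents, or a universal property, and it is exactly the ingredient your proposal is missing; if you want to salvage your outline, replace the entire third paragraph by this maximal almost periodic set argument (your universal $M$ can then be recovered afterwards, as in Theorem~\ref{thm3.5}, from the coalescent $M$ of the lemma).
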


\begin{proof}
According to Zorn's lemma, let $C$ be a maximal almost periodic set of $(T,X)$; that is, for any finite subset $\{x_1,\dotsc,x_n\}\subseteq C$, the point $(x_1,\dotsc,x_n)$ is an almost periodic point for the diagonal-wise product semiflow $(T,X^n)$; and no other almost periodic set of $(T,X)$ properly contains it (cf.~\cite[p.~67]{Aus}).

Let $z\in X^C$ such that range $z=C$ and $z\colon C\rightarrow X$ is one to one (for example, $z_c=c$, for each $c\in C$). Let $M=\textrm{cls}_{X^C}{Tz}$, and let $z^\prime\in M$. Now $z^\prime$ is an almost periodic point for $(T,X^C)$, and so $C^\prime=\textrm{range } z^\prime$ is an almost periodic set of $(T,X)$. In fact, it is easy to verify that $C^\prime$ is also maximal, and $z^\prime\colon C\rightarrow C^\prime$ is one to one.

Now, let $\varphi$ be an endomorphism of the minimal semiflow $(T,M)$. Then $(z,\varphi(z))$ is an almost periodic point of $(T,M\times M)$, so range $z\cup\textrm{ range } \varphi(z)$ is an almost periodic set of $(T,X)$. But range $z$ and range $\varphi(z)$ are both maximal almost periodic sets of $(T,X)$, so $\textrm{range }z=\textrm{ range }\varphi(z)$.

If $\gamma$ is a permutation (bijection) of $C$, let $\gamma^*$ denote the induced automorphism of $(T,X^C)$ by $\gamma^*(z)_c=z_{\gamma(c)}$ for each $c\in C$. Define $\gamma$ by $z_{\gamma(c)}=\varphi(z)_c$, for $c\in C$. Since $\varphi(z)$, (regarded as a map of $C$ to $X$) is one to one and range $\varphi(z)=\textrm{ range }z$, $\gamma$ is a permutation of $C$ and $\gamma^*(z)=\varphi(z)$.

Since $\gamma^*=\varphi$ restricted to $Tz$ and $\textrm{cls}_{X^C}{Tz}=M$, then $\gamma^*=\varphi$ on $M$ and so $\varphi$ is an automorphism.
\end{proof}

Now we can obtain the unique universal minimal semiflow of a topological semigroup following the framework of the proofs in
\cite{E69}, ~\cite[Proposition~II.4.2]{G76} and ~\cite[Theorem~8.1]{Aus}.

\begin{thm}\label{thm3.5}
For any topological semigroup $T$, there exists a universal minimal semiflow $(T,\mathrm{M}(T),\pi)$,
and any two universal minimal semiflows for $T$ are isomorphic.
\end{thm}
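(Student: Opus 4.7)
The plan is to mimic the classical Ellis--Auslander argument for the universal minimal flow of a topological group, substituting Lemma~\ref{lem3.4} for the coalescence theorem that is usually invoked in the group case. The proof splits cleanly into an existence part (using a product-and-restrict construction) and a uniqueness part (which is purely formal once a coalescent universal candidate is in hand).

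For existence, I would first argue by a standard cardinality bound that the isomorphism classes of minimal semiflows of $T$ form a set: the weight (and hence the cardinality) of any minimal semiflow is controlled by a cardinal depending only on $T$, for example via a suitable maximal ambit compactification of $T$. Fix a system of representatives $\{(T,Y_i)\}_{i\in I}$ and consider the product semiflow $(T,X)$ with $X=\prod_{i\in I}Y_i$. Applying Lemma~\ref{lem3.4} to $(T,X)$ yields a cardinal $\mathrm{a}$ and a minimal subset $M\subseteq X^{\mathrm{a}}$ such that $(T,M,\pi^{\mathrm{a}})$ is coalescent. To check that $(T,M)$ is universal, take an arbitrary minimal semiflow $(T,Y)$, identify it with some $Y_i$, and compose a coordinate projection $X^{\mathrm{a}}\twoheadrightarrow X$ with the natural projection $X\twoheadrightarrow Y_i=Y$; restricted to $M$, this gives a continuous $T$-equivariant map whose image is a non-empty closed invariant subset of $Y$ and hence, by minimality of $Y$, equals $Y$. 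This produces the required factor map $(T,M)\to(T,Y)$.

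For uniqueness, let $(T,M')$ be any other universal minimal semiflow. By universality applied in both directions, we obtain homomorphisms $h\colon M\to M'$ and $k\colon M'\to M$. The composition $k\circ h$ is an endomorphism of $(T,M)$, and since $(T,M)$ is coalescent by our construction, $k\circ h$ is a homeomorphism; in particular $h$ is injective. Being also surjective (as a homomorphism of minimal semiflows), $h$ is an isomorphism, and $M\cong M'$ as semiflows.

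The main obstacle I anticipate is the cardinality step at the very beginning: for a general topological semigroup $T$ one cannot simply appeal to $\beta T$ as in the discrete group case, so some care is needed to bound the weight of a minimal semiflow of $T$ and to make the product construction legal. Once that set-theoretic hurdle is cleared, the remainder of the proof is the same formal template as in \cite{E69}, \cite[Proposition~II.4.2]{G76} and \cite[Theorem~8.1]{Aus}, with Lemma~\ref{lem3.4} serving the single role of providing a coalescent universal candidate so that the uniqueness argument can close.
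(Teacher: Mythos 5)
Your proposal is correct and follows essentially the same route as the paper: form the product of representatives of the isomorphism classes of minimal semiflows, use Lemma~\ref{lem3.4} to extract a coalescent minimal subset of a power of this product, verify universality via the coordinate projections, and deduce uniqueness from coalescence. The only difference is that you explicitly flag the set-theoretic step (bounding the size of minimal semiflows so that the representatives form a set), which the paper's proof passes over silently; this is a point of extra care rather than a different argument.
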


\begin{proof}
Let $\mathscr{M}=\{(T,X_i,\pi_i)\,|\,i\in I\}$ be the collection of
non-isomorphic minimal semiflows with the phase semigroup $T$. Define
\begin{equation*}
X={\prod}_{i\in I}X_i\quad \textrm{and}\quad \pi\colon(t,(x_i)_{i\in I})\mapsto(tx_i)_{i\in I}
\ \textrm{ from }T\times X\textrm{ to }X.
\end{equation*}
By Lemma~\ref{lem3.4}
there is a cardinal number $\mathrm{a}$ and a minimal subset $M$ of $(T,X^\mathrm{a},\pi^\mathrm{a})$ such that $(T,M,\pi^\mathrm{a})$ is coalescent.
Obviously $(T,M,\pi^\mathrm{a})$ is a universal minimal semiflow of $T$. Suppose $(T,Z,\pi_Z)$ is another universal minimal semiflow of $T$. Then there are $T$-homomorphisms \begin{equation*}
(T,M,\pi^\mathrm{a})\xrightarrow{\phi}(T,Z,\pi_Z)
\xrightarrow{\psi}(T,M,\pi^\mathrm{a}).
\end{equation*}
Since $(T,M,\pi^\mathrm{a})$ is coalescent, $\psi\circ\phi$ and also $\phi,\psi$ are all isomorphisms.
\end{proof}

Next we will be concerned with the effectiveness of the universal minimal semiflow of some
topological semigroups including $\mathbb{R}_+=[0,+\infty)$
or $\mathbb{Q}_+ = \mathbb{Q} \cap [0, \infty)$
Of course for $\mathbb{R}_+$ this way of proving effectiveness is an overkill,
as already the action of $\mathbb{R}_+$ on the $2$-torus via an irrationally oriented line
is minimal and effective (hence free). However, for general acting non locally compact semigroup
our next results may be of interest.


Let $T$ be a topological semigroup.
By $\mathcal{E}nd\,(T)$ we denote the set of continuous self homomorphisms $a$ of $T$
such that $a(T)$ is dense in $T$. By $(T,\mathrm{M}(T))$ we denote the universal minimal semiflow of $T$.
It is easy to check that:
\begin{itemize}
\item Given any $t\in\mathbb{R}_+$ with $t\not=0$,
$\quad \mathcal{E}nd\,(\mathbb{R}_+)t=\{a(t)\,|\,a\in\mathcal{E}nd\, (\mathbb{R}_+)\}=(0,+\infty)$.
\end{itemize}
By a slight modification of the proof of Theorem~\ref{thm2.1} we can obtain the following:

\begin{thm}\label{thm3.6}
Let $T$ be a topological semigroup such that $\langle\mathcal{E}nd\,(T)t\rangle$ is dense in $T$ for all $t\in T$ with $t\not=e$. Then, either $T$ is extremely amenable or
$(T,\mathrm{M}(T))$ is effective (i.e., $t\colon x\mapsto tx$ is not the identity map for any $t\in T$ with $t\not=e$).
\end{thm}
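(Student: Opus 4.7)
The plan is to mimic the proof of Theorem~\ref{thm2.1}, with $\mathcal{E}nd(T)$ playing the role of $\mathrm{Aut}(G)$ and Theorem~\ref{thm3.5} supplying the universal minimal semiflow $(T,\mathrm{M}(T),\pi)$, which we write as $\pi(s,x)=sx$. Suppose $T$ is not extremely amenable, so that $\mathrm{M}(T)$ is not a one-point space. Argue by contradiction: fix $t\in T$ with $t\neq e$ satisfying the structural hypothesis $\mathrm{cls}_T\langle\mathcal{E}nd(T)t\rangle=T$, assume $tx=x$ for every $x\in\mathrm{M}(T)$, and aim to deduce $|\mathrm{M}(T)|=1$.

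For each $a\in\mathcal{E}nd(T)$, twist the action by setting $\alpha_a(s,x)=a(s)x$ for $s\in T$ and $x\in\mathrm{M}(T)$. Joint continuity and the monoid identities for $\alpha_a$ are immediate from $a$ being a continuous homomorphism with dense image (the latter forces $a(e)=e$). Because $a(T)$ is dense in $T$ and $\pi$ is jointly continuous, the orbit $a(T)x$ contains $Tx$ in its closure and is hence dense in $\mathrm{M}(T)$ for every $x$, so $(T,\mathrm{M}(T),\alpha_a)$ is itself minimal. Applying the universality of $(T,\mathrm{M}(T),\pi)$ to this minimal semiflow yields a continuous surjection $\hat a\colon\mathrm{M}(T)\to\mathrm{M}(T)$ satisfying $\hat a(sx)=a(s)\hat a(x)$ for every $s\in T$ and $x\in\mathrm{M}(T)$.

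Now evaluate the identity $tx=x$ under $\hat a$: one gets $a(t)\hat a(x)=\hat a(tx)=\hat a(x)$ for every $x\in\mathrm{M}(T)$, and surjectivity of $\hat a$ upgrades this to $a(t)y=y$ for every $y\in\mathrm{M}(T)$. Let $F=\{s\in T:sy=y\text{ for all }y\in\mathrm{M}(T)\}$. Joint continuity of $\pi$ makes $F$ closed in $T$, and $F$ is clearly a submonoid. We have just shown $\mathcal{E}nd(T)t\subseteq F$, hence $\langle\mathcal{E}nd(T)t\rangle\subseteq F$, and closedness gives $T=\mathrm{cls}_T\langle\mathcal{E}nd(T)t\rangle\subseteq F$. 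Thus every $s\in T$ fixes every $y\in\mathrm{M}(T)$, and minimality forces $\mathrm{M}(T)$ to be a singleton, the promised contradiction.

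The conceptual crux, and the point at which the semigroup argument parts company with the group argument, is that we never need $\hat a$ to be a homeomorphism: surjectivity, which is already built into the definition of a homomorphism of minimal semiflows, is all that is required to transfer the fixing property $a(t)\hat a(x)=\hat a(x)$ into a statement about every $y\in\mathrm{M}(T)$. This lets us bypass the coalescence step that in the proof of Theorem~\ref{thm2.1} was used to promote $\hat a$ to a homeomorphism via the composition $\hat a\circ\widehat{a^{-1}}$, a construction unavailable when $a$ need not be invertible. The only place where the distinguishing feature of $\mathcal{E}nd(T)$ (density of $a(T)$) really intervenes is in the verification that each twisted semiflow $(T,\mathrm{M}(T),\alpha_a)$ is minimal, and that is the one step to handle with care.
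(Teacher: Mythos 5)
Your proposal is correct and follows essentially the route the paper intends: Theorem~\ref{thm3.6} is left as a ``slight modification'' of the proof of Theorem~\ref{thm2.1}, and your argument is exactly that modification, with $(T,\mathrm{M}(T))$ supplied by Theorem~\ref{thm3.5}, the twisted semiflow $\alpha_a$ shown minimal via density of $a(T)$, and $\hat a$ obtained from universality. Your key observation---that surjectivity of $\hat a$ (built into the definition of a homomorphism) already transfers $a(t)\hat a(x)=\hat a(x)$ to $a(t)y=y$ for all $y$, so the coalescence/homeomorphism step of the group proof, unavailable for non-invertible $a\in\mathcal{E}nd(T)$, is simply not needed---is precisely the right way to adapt the argument.
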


%
%

It should be noted that even if $T$ is locally compact, Theorem~\ref{thm3.6} is already beyond the framework of Veech~\cite[Theorem~2.2.1]{V77} which is proven only for locally compact groups.

We say that a semiflow $(T,X,\pi)$ is
\textit{free at $t\in T$}
if $tx\not=x\ \forall x\in X$.
We now have a semigroup version of Theorem \ref{thm-free}:

\begin{thm}\label{thm3.7}
Let $T$ be a topological semigroup such that
$\mathcal{E}nd\,(T)t$ is dense in $T\setminus\{e\}$ for all $t\in T$ with $t\not=e$. If $(T,\mathrm{M}(T))$ is
free
at some element $\tau\in T$, then $(T,\mathrm{M}(T))$ is
free.
\end{thm}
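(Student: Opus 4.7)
The plan is to mirror the proof of Theorem~\ref{thm-free}, replacing automorphisms by endomorphisms with dense range, and to argue that no invertibility of the $\hat a$'s is actually needed for this particular argument.

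First I would set up the analogue of the maps $\hat a$ in the semigroup setting. Given $a\in\mathcal{E}nd\,(T)$, define a new semiflow on $\mathrm{M}(T)$ by $(t,x)\mapsto a(t)x$. Since $a(T)$ is dense in $T$ and the action map is jointly continuous, this semiflow is minimal; hence by the universal property of $(T,\mathrm{M}(T))$ there exists a continuous surjection $\hat a\colon\mathrm{M}(T)\to\mathrm{M}(T)$ satisfying $\hat a(tx)=a(t)\hat a(x)$ for all $t\in T$ and $x\in\mathrm{M}(T)$. This is exactly the construction that appears (without inverse map) in the proof of Theorem~\ref{thm2.1}, and is all that is needed below.

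Next I would argue by contradiction. Suppose $(T,\mathrm{M}(T))$ is free at $\tau$ but not free at some $t\in T$ with $t\neq e$; pick $x_0\in\mathrm{M}(T)$ with $tx_0=x_0$. By the density of $\mathcal{E}nd\,(T)t$ in $T\setminus\{e\}$, choose a net $\{a_n\}\subseteq\mathcal{E}nd\,(T)$ with $a_n(t)\to\tau$. Applying $\hat{a}_n$ to the equation $tx_0=x_0$ and using the intertwining relation gives
\begin{equation*}
a_n(t)\hat{a}_n(x_0)=\hat{a}_n(tx_0)=\hat{a}_n(x_0).
\end{equation*}
By compactness of $\mathrm{M}(T)$, after passing to a subnet we may assume $\hat{a}_n(x_0)\to y\in\mathrm{M}(T)$; joint continuity of the action then yields $\tau y=y$, contradicting the freeness of $(T,\mathrm{M}(T))$ at $\tau$.

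The only step that requires care is the first one: in the group case one uses $a^{-1}$ together with coalescence to conclude that $\hat a$ is a homeomorphism, which is unavailable here. Fortunately, the argument above only exploits that $\hat a$ is a well-defined continuous map satisfying the intertwining identity, so the absence of an inverse presents no obstacle. Everything else is a routine transcription of the proof of Theorem~\ref{thm-free} into semigroup language.
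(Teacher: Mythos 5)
Your proposal is correct and follows essentially the same route the paper intends: Theorem~\ref{thm3.7} is stated as the semigroup version of Theorem~\ref{thm-free}, and your argument is precisely that proof transcribed, with the (valid) observation that only the existence of a continuous intertwining map $\hat a$ — not its invertibility — is needed. Your construction of $\hat a$ via minimality of the twisted action (using density of $a(T)$ and joint continuity) and the universal property matches the paper's construction in the proof of Theorem~\ref{thm2.1}, so the argument goes through as you describe.
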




\begin{cor}\label{cor3.8}
Let $T$ be an abelian semigroup such that $\mathcal{E}nd\,(T)t$ is dense in $T\setminus\{e\}$ for all $t\in T$ with $t\not=e$. Then $(T,\mathrm{M}(T))$ is free if $T$ is not extremely amenable.
\end{cor}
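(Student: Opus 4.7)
The plan is to derive Corollary~\ref{cor3.8} by combining Theorem~\ref{thm3.6} with Theorem~\ref{thm3.7}, using abelianness to bridge between ``effective at some $\tau$'' and ``free at some $\tau$''.

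First, since $T$ is not extremely amenable and the hypothesis of Corollary~\ref{cor3.8} implies that of Theorem~\ref{thm3.6} (density in $T\setminus\{e\}$ is stronger than generating a dense subsemigroup), Theorem~\ref{thm3.6} produces an element $\tau\in T$ with $\tau\ne e$ such that $\pi_\tau\ne\textit{id}_{\mathrm{M}(T)}$. Equivalently, the fixed-point set
\[
\mathrm{Fix}(\tau)=\{x\in\mathrm{M}(T) : \tau x=x\}
\]
is a proper subset of $\mathrm{M}(T)$. By joint continuity of the action, $\mathrm{Fix}(\tau)$ is closed.

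Next, I would exploit commutativity. Because $T$ is abelian, for every $s\in T$ and every $x\in\mathrm{Fix}(\tau)$ one has $\tau(sx)=(\tau s)x=(s\tau)x=s(\tau x)=sx$, so $\mathrm{Fix}(\tau)$ is $T$-invariant. Since $(T,\mathrm{M}(T))$ is minimal and $\mathrm{Fix}(\tau)$ is a proper closed invariant subset, we conclude $\mathrm{Fix}(\tau)=\emptyset$; that is, $\tau x\ne x$ for every $x\in\mathrm{M}(T)$. Hence $(T,\mathrm{M}(T))$ is free at $\tau$.

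Finally, the density hypothesis $\overline{\mathcal{E}nd(T)t}\supseteq T\setminus\{e\}$ is exactly the assumption of Theorem~\ref{thm3.7}; applying that theorem to the element $\tau$ constructed above upgrades freeness at $\tau$ to freeness on all of $T\setminus\{e\}$, completing the proof. The only subtle point is the minimality argument ensuring that the fixed set of a single abelian element is either empty or everything; beyond that the corollary is a direct stringing-together of the two preceding theorems, so no new technical obstacle is expected.
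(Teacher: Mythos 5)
Your proposal is correct and follows essentially the same route as the paper: the paper also takes a hypothetical fixed point of $\tau$, uses commutativity to see that the whole (dense, by minimality) orbit consists of fixed points, contradicts the effectiveness given by Theorem~\ref{thm3.6}, and invokes Theorem~\ref{thm3.7} to pass from freeness at one element to freeness everywhere; your formulation via the closed invariant set $\mathrm{Fix}(\tau)$ being empty is just the contrapositive packaging of that argument.
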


\begin{proof}
If $(T,\mathrm{M}(T))$ is free at some $\tau\in T$, then by Theorem~\ref{thm3.7} it is free. Now let there be some $\tau\in T$ with $\tau\not=e$ such that $\tau x_0=x_0$ for some point $x_0\in X$. Then $\tau tx_0=tx_0$ for any $t\in T$. Since $Tx_0$ is dense in $X$ by minimality, hence $\tau x=x$ for all $x\in X$. However, this contradicts Theorem~\ref{thm3.6}. Thus $(T,\mathrm{M}(T))$ is free.
\end{proof}
\section{The effectiveness of some universal minimal proximal flows}\label{sec4}
This section will be devoted to proving Theorems~\ref{thm2.3} and \ref{thm2.4}. Let $G$ be a topological group. First, we will need a lemma.

\begin{lem}[{See~ \cite[Lemma~II.4.1]{G76}}]\label{lem4.1}
The only endomorphism a minimal proximal $G$-flow admits is the identity automorphism.
\end{lem}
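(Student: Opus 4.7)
The plan is to produce a fixed point of the endomorphism using proximality, and then propagate fixedness to all of $X$ using minimality and $G$-equivariance.

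First I would let $\varphi\colon X\to X$ be an endomorphism of the minimal proximal flow $(G,X)$, meaning $\varphi$ is a continuous $G$-equivariant self-map (which is automatically surjective, since $\varphi(X)$ is a closed $G$-invariant subset of the minimal flow $X$). The goal is to show $\varphi = \mathrm{id}_X$.

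Next, I would pick an arbitrary $x\in X$ and apply proximality to the pair $(x,\varphi(x))\in X\times X$: there exists a net $\{t_n\}\subset G$ such that $\lim t_n x = \lim t_n\varphi(x) = z$ for some common limit $z\in X$ (passing to subnets as needed using compactness of $X$). By $G$-equivariance, $t_n\varphi(x) = \varphi(t_n x)$, so by continuity of $\varphi$ we obtain
\[
\varphi(z) = \varphi(\lim t_n x) = \lim \varphi(t_n x) = \lim t_n\varphi(x) = z.
\]
So $z$ is a fixed point of $\varphi$.

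Finally, using equivariance again, for every $t\in G$ we have $\varphi(tz) = t\varphi(z) = tz$, so $\varphi$ fixes the whole orbit $Gz$ pointwise. By minimality, $Gz$ is dense in $X$, and since $\varphi$ is continuous and $X$ is Hausdorff the set of fixed points is closed, hence $\varphi = \mathrm{id}_X$. There is no real obstacle here: the argument is a clean interplay of the three defining features (proximality supplies a fixed point, equivariance propagates it along orbits, minimality and continuity push fixedness to all of $X$); the only thing to be a little careful about is writing the proximality step correctly (passing to subnets so that a single common limit $z$ exists).
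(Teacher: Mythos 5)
Your proof is correct, and it follows essentially the same route as the paper: the paper itself only cites \cite{G76} for Lemma~\ref{lem4.1}, but its proof of the more general Lemma~\ref{lem4.2} is exactly your argument (use proximality to produce a point where the two maps agree --- in your case a fixed point of $\varphi$ --- then spread the agreement by equivariance, minimality and closedness of the agreement set). No gaps; your handling of the subnet/common-limit point and the observation that surjectivity is not even needed are both fine.
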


In fact we can obtain a more general uniqueness result.

\begin{lem}\label{lem4.2}
If $(G,X,\pi_X)\xrightarrow{\theta}(G,Y,\pi_Y)$ is a homomorphism (not necessarily surjective) from a minimal proximal flow $(G,X,\pi_X)$ to another proximal flow $(G,Y,\pi_Y)$, then $\theta$ is the unique homomorphism admitted from $(G,X,\pi_X)$ to $(G,Y,\pi_Y)$.
\end{lem}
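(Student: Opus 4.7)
The plan is to form the ``graph-pairing'' map of the two homomorphisms and then use proximality of the target to force the pairing to land on the diagonal of $Y\times Y$. Suppose $\theta_1,\theta_2\colon (G,X,\pi_X)\to(G,Y,\pi_Y)$ are two homomorphisms. Define
\[
\Theta\colon X\to Y\times Y,\qquad \Theta(x)=(\theta_1(x),\theta_2(x)).
\]
This map is continuous and $G$-equivariant with respect to the product action on $Y\times Y$. Since $X$ is compact and minimal, the image $Z:=\Theta(X)$ is a closed, $G$-invariant subset of $Y\times Y$, and being the continuous equivariant image of a minimal flow, it is itself minimal for the product action.

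Next I would exploit proximality of $(G,Y,\pi_Y)$. Fix any $x_0\in X$ and look at $(\theta_1(x_0),\theta_2(x_0))\in Y\times Y$. By proximality, there exists a net $\{t_n\}\subset G$ such that $\lim t_n\theta_1(x_0)=\lim t_n\theta_2(x_0)=y^{*}$ for some $y^{*}\in Y$; by passing to a subnet (using compactness of $X$) one may also arrange $t_n x_0\to x^{*}\in X$. Continuity and equivariance of $\Theta$ then give $\Theta(x^{*})=(y^{*},y^{*})$, so the minimal set $Z$ meets the diagonal $\Delta_Y=\{(y,y)\colon y\in Y\}$.

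Finally I would invoke the standard fact that a minimal set which intersects a closed invariant set is contained in it. The diagonal $\Delta_Y$ is closed in $Y\times Y$ and invariant under the product $G$-action, and $Z\cap\Delta_Y\neq\varnothing$, so $Z\subseteq\Delta_Y$. This means $\theta_1(x)=\theta_2(x)$ for every $x\in X$, proving uniqueness.

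I do not foresee a serious obstacle: the only subtlety is to note that, even though $\theta$ is not assumed surjective, $Z$ is still a bona fide minimal subsystem of the product system $(G,Y\times Y)$, so the ``minimal meets invariant closed'' argument applies directly; no surjectivity onto $Y$ is needed.
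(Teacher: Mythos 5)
Your proof is correct and follows essentially the same route as the paper: proximality of the target produces a net along which the two images of a point collapse to a common limit, and minimality of $X$ then propagates the agreement everywhere. Your packaging via the pairing map $\Theta$ and the closed invariant diagonal $\Delta_Y$ is just a reformulation of the paper's argument that the agreement point $x_\infty$ together with minimality forces $\theta=\phi$.
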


\begin{proof}
Let $(G,X,\pi_X)\xrightarrow{\phi}(G,Y,\pi_Y)$ be a homomorphism. Then given any $x\in X$,
set $y_1=\theta(x)$ and $y_2=\phi(x)$.
By proximality there is a net $\{t_n\}$ in $G$ and some $y_\infty\in Y$ such that
$$\lim_n t_n y_1 = \lim_n t_n y_2 = y_\infty.$$
We can assume that the limit,  $x_\infty = \lim t_n x$ exists and then
$\theta(x_\infty)=y_\infty=\phi(x_\infty)$.
Since $(G,X)$ is minimal, we conclude that $\theta(x)=\phi(x)$.
Thus $\theta=\phi$ on $X$.
\end{proof}

Recall that $\sigma_{\!t}\colon s\mapsto tst^{-1}$ is the inner-automorphism of $G$ as in (D) in $\S\ref{sec2.2}$. Let $(G,\Pi(G),\pi)$ be the universal minimal proximal flow associated with $G$ and simply write $\pi(t,x)=tx$ for $t\in G$ and $x\in\Pi(G)$ as in (C) in $\S\ref{sec2.2}$.
We also recall the construction of the homomorphism $a\mapsto\hat{a}$ of $\mathrm{Aut}(G)$ to $\mathrm{Homeo}(\Pi(G))$ introduced in \cite[p.~23]{G76}.
In fact, this is in essence the same as the construction described above for $\mathrm{M}(G)$
in the proof of Theorem~\ref{thm2.1};
the only difference is that here we use the fact that $\Pi(G)$ admits no non-trivial endomorphisms
instead of the coalescence of $\mathrm{M}(G)$.

It should be mentioned that usually we cannot expect the continuity of the homomorphism
$a\mapsto\hat{a}$, from  $\mathrm{Aut}(G)$ to $\mathrm{Homeo}(\Pi(G))$, when the former
is equipped with its natural compact-open or pointwise convergence topologies.

\begin{proof}[Proof of Theorem~\ref{thm2.3}]
(1) $\Leftrightarrow$ (2): First note that
the map
$t\mapsto\widehat{\sigma_{\!t}}$ from $G$ to $\mathrm{Homeo}(\Pi(G))$
is a homomorphism. Then the statement easily follows from $\widehat{\sigma_{\!t}}=t$.

(1) $\Rightarrow$ (3): Let $a\in\mathrm{Aut}(G)$ be such that $\hat{a}$ is the identity map on $\Pi(G)$. Then by the equality $\hat{a}(tx)=a(t)\hat{a}(x)$, it follows that $tx=a(t)x$ for all $x\in X$ and  $t\in G$. Since $(G,\Pi(G),\pi)$ is effective, then $a(t)=t$ for every $t\in G$ and so $a=\textit{id}_G$. This shows that the homomorphism $a\mapsto\hat{a}$ is one-to-one.

(3) $\Rightarrow$ (2): Since $\mathfrak{C}(G)=\{e\}$,
the map
$t\mapsto\sigma_{\!t}$ from $G$ to $\mathrm{Aut}(G)$
is one-to-one. Thus by condition (3), it follows that $t\mapsto\widehat{\sigma_{\!t}}$ is one-to-one.

The proof of Theorem~\ref{thm2.3} is completed.
\end{proof}

\begin{proof}[Proof of Theorem~\ref{thm2.4}]
This proof is analogous to the proof of Theorem~\ref{thm2.1} and so we omit the details.
\end{proof}

Let $(\mathrm{Aut}(G),\Pi(G))$ be the canonical extension of $(G,\Pi(G))$.
Then we can easily obtain the following by Theorem~\ref{thm2.3}.

\begin{cor}\label{cor4.3}
Let $G$ be a topological group with $\mathfrak{C}(G)=\{e\}$. Then $(G,\Pi(G))$ is effective if and only if so is $(\mathrm{Aut}(G),\Pi(G))$.
\end{cor}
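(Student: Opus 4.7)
The plan is to reduce the statement directly to Theorem~\ref{thm2.3}, which under the standing hypothesis $\mathfrak{C}(G)=\{e\}$ already packages the three relevant injectivity/effectiveness conditions into a single equivalence. The key observation is that effectiveness of the extended flow $(\mathrm{Aut}(G),\Pi(G))$ is, by definition of the extended action $(a,x)\mapsto\hat{a}(x)$, nothing other than condition (3) of Theorem~\ref{thm2.3}.

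First I would unpack the extension: for $a\in\mathrm{Aut}(G)$ the action is $a\cdot x=\hat{a}(x)$, so saying $(\mathrm{Aut}(G),\Pi(G))$ is effective is exactly saying that $\hat{a}=\textit{id}_{\Pi(G)}$ forces $a=\textit{id}_G$, i.e.\ that the homomorphism $a\mapsto\hat{a}$ from $\mathrm{Aut}(G)$ to $\mathrm{Homeo}(\Pi(G))$ is one-to-one. Thus effectiveness of $(\mathrm{Aut}(G),\Pi(G))$ is literally condition (3) of Theorem~\ref{thm2.3}.

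For the forward direction, assume $(G,\Pi(G))$ is effective, i.e.\ condition (1) of Theorem~\ref{thm2.3} holds. Since $\mathfrak{C}(G)=\{e\}$, Theorem~\ref{thm2.3} gives (1)$\Rightarrow$(3), so $a\mapsto\hat{a}$ is injective and hence $(\mathrm{Aut}(G),\Pi(G))$ is effective. For the reverse direction, assume $(\mathrm{Aut}(G),\Pi(G))$ is effective; then (3) holds, and Theorem~\ref{thm2.3} yields (3)$\Rightarrow$(1), giving effectiveness of $(G,\Pi(G))$.

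There is no serious obstacle here, the only point worth highlighting is the role of the centerlessness hypothesis $\mathfrak{C}(G)=\{e\}$: without it, condition (3) is strictly stronger than (1) because the kernel of $t\mapsto\sigma_{\!t}$ is precisely $\mathfrak{C}(G)$, and central elements of $G$ always act trivially on $\Pi(G)$. So the triviality of the center is exactly what allows the equivalence to be read in both directions via Theorem~\ref{thm2.3}; the corollary is then immediate.
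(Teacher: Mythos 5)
Your proposal is correct and coincides with the paper's intended argument: the paper derives Corollary~\ref{cor4.3} directly from Theorem~\ref{thm2.3}, exactly as you do, by noting that effectiveness of the extended flow $(\mathrm{Aut}(G),\Pi(G))$ is precisely injectivity of $a\mapsto\hat{a}$, i.e.\ condition (3), and then invoking the equivalence $(1)\Leftrightarrow(3)$ under $\mathfrak{C}(G)=\{e\}$. Your remark on the role of centerlessness is also the right explanation of why the hypothesis is needed.
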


In many cases
the group of inner-automorphisms
$\mathrm{Inn}(G) = \{\sigma_{\!t}\,|\,t\in G\}$ is a proper subgroup of
$\mathrm{Aut}(G)$
and so Corollary~\ref{cor4.3} seems to be interesting. The ``if'' part of Corollary~\ref{cor4.3} will be generalized in Theorem~\ref{thm4.4} below.

Next we consider the inheritance of
freeness of the universal minimal flows associated to topological semigroups.

\begin{thm}\label{thm4.4}
Let $T$ be a topological semigroup and $H$ a subsemigroup of $T$. Then
\begin{enumerate}
\item[$(1)$] If $(T,\mathrm{M}(T))$ is
free,
then $(H,\mathrm{M}(H))$ is also
free.
\item[$(2)$] If $(T,\Pi(T))$ is
free,
then $(H,\Pi(H))$ is also
free.
\item[$(3)$] If $(T,\Pi_s(T))$ is
free,
then $(H,\Pi_s(H))$ is also
free.
\end{enumerate}
\end{thm}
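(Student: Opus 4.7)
The plan is to prove all three parts by a uniform restriction-and-lift template. Write $Y_T$ for one of $\mathrm{M}(T),\Pi(T),\Pi_s(T)$ and $Y_H$ for the matching $\mathrm{M}(H),\Pi(H),\Pi_s(H)$. I restrict the $T$-action on $Y_T$ to the subsemigroup $H$, pick, by Zorn's lemma, a minimal closed $H$-invariant subset $M\subseteq Y_T$, and produce, from the universal property of $Y_H$, an $H$-equivariant continuous surjection $\phi\colon Y_H\to M$. Granted such a $\phi$, suppose for contradiction that some $h\in H$ with $h\neq e$ fixes a point $z\in Y_H$; then $\phi(z)\in M\subseteq Y_T$ is fixed by $h$ viewed as an element of $T$, contradicting the freeness of $(T,Y_T)$ at $h$.

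For part (1) this is immediate: $(H,M)$ is a minimal $H$-semiflow, so the universality of $(H,\mathrm{M}(H))$ provided by Theorem~\ref{thm3.5} supplies $\phi$ with no further hypothesis.

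For parts (2) and (3) the universality of $\Pi(H)$, respectively $\Pi_s(H)$, requires $(H,M)$ to be proximal, respectively strongly proximal, as an $H$-flow; this is the only nontrivial step. The idea is to choose $M$ inside $\Pi(T)$, respectively $\Pi_s(T)$, carefully: take $M=u\cdot Y_T$ for a minimal idempotent $u$ in the Ellis semigroup $E(H,Y_T)$, so that $M$ is an $H$-minimal subset of the ambient proximal flow. For part (3) one then combines the preservation of strong proximality under subflows of a common acting semigroup (recalled in $\S\ref{sec2.2}$ just before item~(F)) with the fact that $(T,\Pi_s(T))$ is strongly proximal, tracking proximalizing nets via elements of $E(H,\Pi_s(T))\subseteq E(T,\Pi_s(T))$, to conclude that $(H,M)$ is strongly proximal. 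Part (2) is entirely analogous, with $\Pi(T)$ and proximality in place of their strongly proximal counterparts. Once $(H,M)$ has the required property the surjection $\phi$ exists and the lifting step concludes as in part (1).

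The heart of the proof, and what I expect to be the main obstacle, is precisely this preservation step: showing that a suitably chosen $H$-minimal subflow of a (strongly) proximal $T$-flow is again (strongly) proximal as an $H$-flow. A naive restriction argument does not work, because a $T$-net witnessing $T$-proximality of a pair need not lie in $H$; the remedy is the Ellis-semigroup choice $M=uY_T$, where the idempotent $u$ acts as the identity on $M$ and forces the needed proximal behaviour at the level of $H$. Once this verification is carried out, the remainder of the argument is the same universal-mapping bookkeeping already used in the proof of Theorem~\ref{thm2.1}.
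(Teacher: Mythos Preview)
Your argument for part~(1) is correct and is exactly the paper's proof: restrict, pick an $H$-minimal set, use universality, and pull freeness back along $\phi$.

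For parts~(2) and~(3), however, there is a genuine gap, and your proposed remedy does not close it. The fact you quote from~\S\ref{sec2.2} (``a subflow of a strongly proximal flow is strongly proximal'') concerns closed invariant subsets under the \emph{same} acting semigroup; it says nothing about restricting the acting semigroup from $T$ to $H$. Your Ellis-semigroup choice $M=u\,Y_T$ does not force $(H,M)$ to be proximal: a $T$-net witnessing proximality of a pair in $Y_T$ need not lie in $H$, and the inclusion $E(H,Y_T)\subseteq E(T,Y_T)$ goes the wrong way for what you need. Concretely, take $T=F_2$ and $H\cong\mathbb{Z}$ a cyclic subgroup. Any $H$-minimal subset of $\Pi(T)$ or $\Pi_s(T)$ is infinite, yet $\mathbb{Z}$, being abelian, admits no nontrivial minimal proximal flow; so $(H,M)$ is never proximal and the map $\phi$ from $\Pi(H)$ cannot be produced this way.

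In fact this counterexample shows more: parts~(2) and~(3) are false as stated. By the Kalantar--Kennedy theorem, $(F_2,\Pi_s(F_2))$ is free (since $F_2$ is $C^*$-simple), and hence $(F_2,\Pi(F_2))$ is free as well, since $\Pi_s(F_2)$ is a factor of $\Pi(F_2)$. But $\Pi(\mathbb{Z})=\Pi_s(\mathbb{Z})$ is a single point, on which the action of $\mathbb{Z}$ is not free. The paper's own proof simply says ``we can easily prove (2) and (3) similarly,'' which glosses over exactly the obstruction you identified; so your instinct that this is ``the main obstacle'' was right, but it is an obstacle that cannot be overcome.
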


\begin{proof}
Let $(T,\mathrm{M}(T))$ be free;
then $(H,\mathrm{M}(T))$ is also free.
Now let $X$ be an $H$-minimal subset of $\mathrm{M}(T)$ and then $(H,X)$ is obviously
free. Then by the universality of $(H,\mathrm{M}(H))$, it follows that there is a homomorphism
$(H,\mathrm{M}(H))\xrightarrow{\phi}(H,X)$. This implies that $(H,\mathrm{M}(H))$ is
free. This proves (1). We can easily prove (2) and (3) similarly.
\end{proof}



\begin{thm}\label{thm4.5}
Let $G$ be a topological group with $\mathrm{Aut}(G)t$ dense in $G\setminus\{e\}$ for some $t\in G$ with $t\not=e$. If $(G,\Pi(G))$ is
free at some $\tau\in G$, then $(G,\Pi(G))$ is free at $t$.
\end{thm}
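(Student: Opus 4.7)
The plan is to run the same contradiction argument that was used for Theorem~\ref{thm-free}, but now on $\Pi(G)$ rather than on $\mathrm{M}(G)$. The necessary ingredient — the homomorphism $a\mapsto \hat a$ from $\mathrm{Aut}(G)$ to $\mathrm{Homeo}(\Pi(G))$ satisfying the equivariance relation $\hat a(sx)=a(s)\hat a(x)$ for all $s\in G$ and $x\in\Pi(G)$ — is already in place for $\Pi(G)$ via property (D) from $\S\ref{sec2.2}$ (or equivalently from the construction recalled in the proof of Theorem~\ref{thm2.3}), so there is no extra work to do on that front.

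First I would assume toward a contradiction that $tx_0=x_0$ for some $x_0\in\Pi(G)$. Note that $\tau\neq e$, for otherwise freeness of $(G,\Pi(G))$ at $\tau$ would fail trivially; hence $\tau\in G\setminus\{e\}$, and the density hypothesis $\mathrm{cls}_G\mathrm{Aut}(G)t\supseteq G\setminus\{e\}$ supplies a net $\{a_n\}\subseteq\mathrm{Aut}(G)$ with $a_n(t)\to\tau$. Applying the homeomorphism $\hat a_n$ to the fixed-point equation and using equivariance gives
\[
\hat a_n(x_0)=\hat a_n(tx_0)=a_n(t)\,\hat a_n(x_0).
\]

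Next, by compactness of $\Pi(G)$ I would pass to a subnet so that $\hat a_n(x_0)\to y$ for some $y\in\Pi(G)$. The joint continuity of the action of $G$ on $\Pi(G)$ then yields $a_n(t)\hat a_n(x_0)\to\tau y$, while the displayed equality forces this limit to equal $y$. Hence $\tau y=y$, contradicting the hypothesis that $(G,\Pi(G))$ is free at $\tau$. This contradiction shows that no $x_0\in\Pi(G)$ can be fixed by $t$, i.e.\ $(G,\Pi(G))$ is free at $t$.

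There is essentially no serious obstacle: the argument is a direct transplant of the one used for $\mathrm{M}(G)$, since the only structural facts employed — existence of the equivariant homeomorphisms $\hat a_n$ and compactness plus joint continuity of the flow — are equally available for $\Pi(G)$. The only small point to check is that the hypothesis of density is needed only at the specific element $t$, which is exactly what the statement provides, and that $\tau\neq e$ so that the chosen net $a_n(t)$ is eventually in $G\setminus\{e\}$ and can accumulate at $\tau$.
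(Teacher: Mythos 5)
Your proposal is correct and is essentially identical to the paper's argument: the paper itself states that the proof of Theorem~\ref{thm4.5} is almost verbatim the same as that of Theorem~\ref{thm-free}, using the equivariant homeomorphisms $\hat a$ on $\Pi(G)$ furnished by (D), a net $a_n(t)\to\tau$, compactness, and joint continuity to produce a $\tau$-fixed point. No gaps.
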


The proof is almost verbatim the same as that of Theorem~\ref{thm-free} and thus we omit its details here.

%

The following is a simple consequence of Theorem~\ref{thm4.5}, which is comparable with Corollary~\ref{cor2.6}.

\begin{cor}\label{cor4.6}
Let $G$ be a topological group with the characteristic (strong) Rohlin property, such that $(G,\Pi(G))$ is
free at some $\tau\in G$. Then there exists a (co-meager) dense subset $E$ of $G\setminus\{e\}$ such that $G$ acts
freely on $\Pi(G)$ at each $t\in E$.
\end{cor}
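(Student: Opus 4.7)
The plan is to combine the characteristic (strong) Rohlin property with Theorem~\ref{thm4.5} and the intertwining identity $\hat{a}(tx)=a(t)\hat{a}(x)$ that underlies the $\mathrm{Aut}(G)$-action on $\Pi(G)$. First, using the characteristic Rohlin hypothesis, I would fix some $g\in G\setminus\{e\}$ whose orbit $\mathrm{Aut}(G)g$ is dense in $G\setminus\{e\}$; under the strong version of the hypothesis this orbit is even co-meager. Applying Theorem~\ref{thm4.5} with $t=g$, and invoking the assumed freeness of $(G,\Pi(G))$ at $\tau$, I conclude that $(G,\Pi(G))$ is free at $g$ itself.

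I would then simply take $E=\mathrm{Aut}(G)g$. By the choice of $g$, $E$ is already a dense (respectively co-meager) subset of $G\setminus\{e\}$, so it remains only to verify that $G$ acts freely on $\Pi(G)$ at each $s\in E$. Given $s=a(g)$ with $a\in\mathrm{Aut}(G)$, suppose toward a contradiction that $sy=y$ for some $y\in\Pi(G)$. Since $\hat{a}\in\mathrm{Homeo}(\Pi(G))$, there is a unique $x\in\Pi(G)$ with $y=\hat{a}(x)$, and the intertwining equation yields
\[
\hat{a}(gx)=a(g)\hat{a}(x)=sy=y=\hat{a}(x),
\]
so $gx=x$, contradicting freeness of $(G,\Pi(G))$ at $g$.

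The main obstacle, such as it is, has already been absorbed into Theorem~\ref{thm4.5}: once freeness is transported to the distinguished element $g$, the remainder of the argument is nothing more than the orbit-transport identity for $\hat{a}$, i.e.\ essentially the same bookkeeping as in the proof of Theorem~\ref{thm-free}. No new ingredient beyond invoking Theorem~\ref{thm4.5} together with the fact that each $\hat{a}$ is a self-homeomorphism of $\Pi(G)$ conjugating the $G$-action along $a$ appears to be needed.
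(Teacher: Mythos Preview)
Your proof is correct and follows the paper's approach, which simply declares the corollary a ``simple consequence of Theorem~\ref{thm4.5}.'' One small simplification: your final step transporting freeness from $g$ to each $s=a(g)\in E$ via the intertwining identity is unnecessary, since $\mathrm{Aut}(G)s=\mathrm{Aut}(G)g$ is itself dense in $G\setminus\{e\}$, so Theorem~\ref{thm4.5} applies directly to every $s\in E$.
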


We note that in the same way the above Corollary~\ref{cor4.3}, Theorem~\ref{thm4.5} and Corollary~\ref{cor4.6} can be restated for $\Pi_s(G)$ in place of $\Pi(G)$.
\section{A generalization of Furstenberg's isomorphism theorem}\label{sec5}
Based on the construction of the homomorphism $a\mapsto\hat{a}$ presented in $\S\ref{sec3}$, this section will be mainly devoted to proving Theorem~\ref{thm2.7}. For that, let $G$ be a topological group and let $S$ be a \textit{closed} proper subgroup of \textit{finite index} in $G$, unless otherwise specified.

First of all, we will need a useful lemma.

\begin{lem}[{See \cite[Lemma~II.3.2]{G76}}]\label{lem5.1}
Let $T$ be a topological group and $(T,X,\varphi)$ a minimal proximal flow.
\begin{enumerate}
\item If $T$ is a compact extension of its subgroup $L$, then $(L,X,\varphi{\upharpoonright_{\!L\times X}})$ is minimal and proximal.
\item If $L$ is a closed subgroup of finite index in $T$, then $(L,X,\varphi{\upharpoonright_{\!L\times X}})$ is minimal and proximal.
\end{enumerate}
\end{lem}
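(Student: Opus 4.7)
The plan is to establish proximality and minimality of the restriction $(L,X,\varphi|_{L\times X})$ separately, exploiting in both cases that every $\tau\in T$ decomposes as $\tau=k\ell$ with $\ell\in L$ and $k$ drawn from a fixed set that is \emph{compact} in case (1) (by the compact-extension hypothesis, giving a compact $K\subseteq T$ with $T=KL$) or \emph{finite} in case (2) (a transversal $\{t_1,\dots,t_n\}$ for the finite coset space $T/L$).

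To show proximality of $(L,X)$, I would take $x,y\in X$ and invoke proximality of $(T,X)$ to produce a net $\tau_\alpha\in T$ and $z\in X$ with $\tau_\alpha x\to z$ and $\tau_\alpha y\to z$. Writing $\tau_\alpha=k_\alpha\ell_\alpha$ and passing to a subnet along which $k_\alpha\to k$ (trivial in case (2)), one obtains $\ell_\alpha x=k_\alpha^{-1}\tau_\alpha x\to k^{-1}z$ and similarly $\ell_\alpha y\to k^{-1}z$, so $x$ and $y$ are $L$-proximal.

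For minimality I would first use the just-proven $L$-proximality to deduce that $(L,X)$ has a \emph{unique} $L$-minimal subset $M$: two distinct $L$-minimal subsets would be disjoint closed $L$-invariant sets, and so could not share a common $L$-proximal limit. In case (2) I would then pass to the normal core $N=\bigcap_{i=1}^n t_iLt_i^{-1}$, a closed normal subgroup of $T$ of finite index, and rerun the proximality argument to get $(N,X)$ proximal with a unique $N$-minimal set $M_N$. Normality of $N$ in $T$ forces $tM_N$ to be a closed $N$-minimal set for every $t\in T$, so $tM_N=M_N$ by uniqueness; hence $M_N$ is $T$-invariant, equals $X$ by minimality of $(T,X)$, and therefore $(L,X)$ is minimal. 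Case (1) follows the same outline: one observes that $X=K\cdot\overline{Lx}$ for every $x\in X$ (from $T=KL$, compactness of $K$, and $T$-minimality) and then squeezes $\overline{Lx}=X$ out of this together with the uniqueness of $M$ and the proximality of $(T,X)$.

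I expect the main obstacle to be the minimality step in case (1). The finite-index normal-core trick used for (2) has no evident analogue in the compact-extension setting, since $\bigcap_{k\in K}kLk^{-1}$ need not be cocompact or in any sense large. One therefore has to extract minimality directly from the interplay between the compact transversal $K$, the proximal relation on $X$, and the uniqueness of the $L$-minimal set $M$; and it is precisely here that the full strength of proximality of $(T,X)$ (not merely its minimality) is genuinely used.
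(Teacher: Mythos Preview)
The paper does not supply its own proof of this lemma; it simply cites \cite[Lemma~II.3.2]{G76}. So there is nothing to compare against directly, and your proposal is the only argument on the table.

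Your overall strategy is sound and is essentially the standard one, but you have misread the hypothesis in case~(1). Look at the sentence immediately following the lemma in the paper: ``$T$ is called a \emph{compact extension} of $L$ if $L$ is a closed \emph{normal} subgroup of $T$ such that $T/L$ is compact.'' Thus in case~(1) the subgroup $L$ is already normal, and the ``normal-core trick'' you carry out for case~(2) applies verbatim: the unique $L$-minimal set $M$ satisfies $tM=M$ for every $t\in T$ (since $tM$ is again $L$-minimal by normality), hence $M$ is $T$-invariant and equals $X$. Your entire final paragraph of worry about case~(1) is therefore unnecessary, and the vague ``squeezing'' argument you sketch there can be discarded.

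One further point deserves care. In case~(1) you assert the existence of a compact $K\subseteq T$ with $T=KL$; for general topological groups with $T/L$ compact this is not obvious, and you should not take it for granted. What you actually need is weaker and always available: given a net $\tau_\alpha$ in $T$, pass to a subnet along which $\tau_\alpha L\to sL$ in the compact space $T/L$; then, using that the quotient map $T\to T/L$ is open, refine further to write $\tau_\beta=k_\beta\ell_\beta$ with $k_\beta\to s$ and $\ell_\beta\in L$. This yields $\ell_\beta x\to s^{-1}z$ and $\ell_\beta y\to s^{-1}z$ exactly as you want, without ever invoking a global compact transversal.
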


\noindent
Here $T$ is called a \textit{compact extension} of $L$ if $L$ is a closed normal subgroup of $T$ such that the quotient group $T/L$ is a compact group.

We also will need a technical lemma.

\begin{lem}\label{lem5.2}
There exists a normal closed subgroup $N$ of $G$ such that $N\subseteq S$ and that $N$ is of finite index in $G$.
\end{lem}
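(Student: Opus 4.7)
The plan is to construct $N$ as the \emph{normal core} of $S$ in $G$, namely
\[
N = \bigcap_{g\in G} gSg^{-1}.
\]
By construction $N$ is the largest subgroup of $S$ that is normal in $G$, so normality and the inclusion $N\subseteq S$ (take $g=e$) are immediate. Each conjugate $gSg^{-1}$ is closed because $S$ is closed and the inner automorphism $\sigma_{\!g}$ is a homeomorphism, so once we know that only finitely many distinct conjugates appear in the intersection, $N$ will be closed as a finite intersection of closed sets.

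The key step is therefore to verify that the set $\{gSg^{-1}\,|\,g\in G\}$ is finite and that $N$ has finite index. For the first point, observe that $gSg^{-1}=g'Sg'^{-1}$ exactly when $g^{-1}g'$ lies in the normalizer $N_G(S)$; since $N_G(S)\supseteq S$, the number of distinct conjugates is at most $[G:N_G(S)]\le[G:S]<\infty$. For the second point, I would use the standard left-multiplication action of $G$ on the finite coset space $G/S$; this yields a homomorphism $\rho\colon G\to\mathrm{Sym}(G/S)$ whose kernel is exactly $N$, so
\[
[G:N] = |\rho(G)| \ \le\ [G:S]!\ <\ \infty.
\]

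The argument is essentially algebraic; the only topological input is that a conjugate of a closed subgroup is closed, and a finite intersection of closed sets is closed. I do not anticipate any real obstacle, since we do not need any continuity of the $G$-action on $G/S$ (which is merely a finite set) nor any separation assumption beyond $\mathrm{T}_2$ already built into the standing hypotheses. The only mildly subtle point is to remember that finite index alone forces only finitely many conjugates via the normalizer estimate; once that is in hand, closedness and finite index of $N$ follow without further work.
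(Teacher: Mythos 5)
Your proof is correct. It is essentially the paper's argument made self-contained: the paper simply cites the algebraic fact that a finite-index subgroup of the abstract (discrete) group $G$ contains a normal subgroup $M$ of finite index (the standard proof of which is exactly your normal core / permutation-representation argument on $G/S$), and then passes to $N=\mathrm{cls}_G M$ to get closedness. You instead work with the core $N=\bigcap_{g\in G}gSg^{-1}$ directly and observe it is already closed, so no closure step is needed; this is a slightly cleaner route and yields the same (indeed the canonical) subgroup. One small simplification: your detour through the normalizer to show there are only finitely many distinct conjugates is unnecessary for closedness, since an arbitrary intersection of closed sets is closed in any topological space; the finiteness of the index is anyway supplied by your kernel computation $[G:N]\le [G:S]!$.
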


\begin{proof}
Under the discrete topology of $G$, $S$ contains a normal subgroup, say $M$, of $G$ of finite index in $G$ (see \cite[p.~26]{HR} or \cite[p.~24]{G76}). Now let $N=\textrm{cls}_GM$.
Then it is easy to check that $N$ is a normal closed subgroup of $G$ satisfying the
claim of the lemma.
\end{proof}

From now on, let $N$ be as in Lemma~\ref{lem5.2}.
Let $(G,\Pi(G)), (S,\Pi(S))$ and $(N,\Pi(N))$ be the universal minimal proximal flows of the topological groups $G, S$ and $N$, respectively, as in (C) in $\S\ref{sec2.2}$.

First we need to extend the natural action of $N$ on $\Pi(N)$ to an action of $G$ on $\Pi(N)$. For this
we define an action of $G$ on $\Pi(N)$ by mapping $G$ into $\mathrm{Aut}(N)$ as follows:
\begin{equation*}
\zeta\colon G\times\Pi(N)\rightarrow\Pi(N);\quad (t,z)\mapsto\widehat{\sigma_{\!t}{\!\upharpoonright_{\!N}}}(z),
\end{equation*}
where\;
$\widehat{{}}\,\colon\mathrm{Aut}(N)\rightarrow\mathrm{Homeo}(\Pi(N))$
is the canonical map introduced in $\S\ref{sec3}$, with $N$ in place of $G$.



\begin{lem}\label{lem5.3}
$(G,\Pi(N),\zeta)$ is a flow, and as such it is minimal and proximal.
\end{lem}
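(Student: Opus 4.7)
The plan is to verify the three defining properties of a flow (being a well-defined group action, joint continuity, and having the identity act trivially) and then separately deduce minimality and proximality from the corresponding properties of the original $N$-action on $\Pi(N)$. The key structural observation is that because $N$ is normal in $G$, the inner-automorphism $\sigma_{\!t}$ restricts to an automorphism of $N$ for every $t\in G$, so $t\mapsto\sigma_{\!t}\!\upharpoonright_{\!N}$ is a genuine homomorphism $G\to\mathrm{Aut}(N)$. Composing with the homomorphism $\widehat{{}}\colon\mathrm{Aut}(N)\to\mathrm{Homeo}(\Pi(N))$ from (D) gives a homomorphism $G\to\mathrm{Homeo}(\Pi(N))$ sending $t$ to $\zeta_t$, which yields the action axioms $\zeta_e=\mathit{id}_{\Pi(N)}$ and $\zeta_{ts}=\zeta_t\circ\zeta_s$ for free. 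Furthermore, for $t\in N$ the inner-automorphism $\sigma_{\!t}\!\upharpoonright_{\!N}$ is just the inner-automorphism of $N$ by $t$, so the ``$G=N$ case'' of (D) applied inside $\Pi(N)$ gives $\zeta(t,z)=tz$; that is, the restriction of $\zeta$ to $N\times\Pi(N)$ is nothing but the original action of $N$ on its universal minimal proximal flow.

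The one non-formal step is joint continuity of $\zeta\colon G\times\Pi(N)\to\Pi(N)$. Here I would use that a closed subgroup of finite index is automatically open, so $N$ is open in $G$ and the finitely many cosets $t_1N,\dots,t_kN$ form an open partition of $G$. On the slab $t_iN\times\Pi(N)$, writing $t=t_in$ with $n=t_i^{-1}t$, one has
\begin{equation*}
\zeta(t,z)=\zeta_{t_i}\!\bigl(\zeta_n(z)\bigr)=\widehat{\sigma_{\!t_i}\!\upharpoonright_{\!N}}(nz),
\end{equation*}
and joint continuity on this slab follows at once: $(n,z)\mapsto nz$ is jointly continuous by the flow axioms for $(N,\Pi(N))$, and $\widehat{\sigma_{\!t_i}\!\upharpoonright_{\!N}}$ is a homeomorphism of $\Pi(N)$. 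Since the slabs cover $G\times\Pi(N)$ by open sets, $\zeta$ is jointly continuous.

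Having established that $\zeta$ is a flow, minimality and proximality are immediate. For every $z\in\Pi(N)$ the $G$-orbit $\zeta(G,z)$ contains the $N$-orbit $Nz$, which is already dense in $\Pi(N)$ by minimality of $(N,\Pi(N))$, so $(G,\Pi(N),\zeta)$ is minimal. Similarly, for any $x,y\in\Pi(N)$ there is a net $\{n_\alpha\}\subset N\subset G$ with $\lim n_\alpha x=\lim n_\alpha y$, witnessing proximality of the pair under the $G$-action as well. Thus $(G,\Pi(N),\zeta)$ is a minimal proximal flow extending the $N$-action on $\Pi(N)$. The main technical point to watch is the joint continuity paragraph, where it is essential to invoke the openness of $N$ (because the homomorphism $a\mapsto\hat{a}$ is in general \emph{not} continuous on $\mathrm{Aut}(N)$, so we cannot obtain continuity in $t$ by any general nonsense applied to the whole of $G$); this is the only place where the hypothesis that $N$ has finite index—rather than merely being normal and closed—is really used.
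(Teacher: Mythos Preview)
Your proof is correct and follows essentially the same approach as the paper's own proof. Both arguments establish joint continuity by using the clopen coset decomposition $G=\bigcup_{i=1}^{k}t_iN$ and reducing to the continuity of the original $N$-action on each slab via the identity $\zeta(t_in,z)=\widehat{\sigma_{t_i}\!\upharpoonright_{\!N}}(nz)$; the paper phrases this with nets while you phrase it in terms of open slabs, and your added verification of the action axioms and of minimality/proximality simply fills in details the paper leaves implicit.
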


\begin{proof}
We only need to verify $\widehat{\sigma_{\!t}{\!\upharpoonright_{\!N}}}(z)$ is jointly continuous with respect to $t\in G$ and $z\in\Pi(N)$.
Since $N$ is closed and of finite index in $G$, there exists a finite set, say $\{s_1,\dotsc,s_k\}\subseteq G$, such that $G=s_1N\cup\dotsm\cup s_kN$ is a clopen partition and $s_iN\cap s_jN=\emptyset$ for $1\le i\not=j\le k$.
Now suppose that the net $(t_i,z_i)\to(t,z)$ in $G\times\Pi(N)$.
Then, passing to subnets (and relabeling) we can assume that for some fixed
$j_0$ we have $t_i \in s_{j_0}N$ for every $i$.
Thus for every $i$ there is some $r_i \in N$ so that $t_i = s_{j_0}r_i$ and
$r_i \to r : = s_{j_0}^{-1}t$.
Observe that for $r \in N$ and $z \in \Pi(N)$
we have $\widehat{\sigma_{\!r}\!\!\upharpoonright_{\!N}}(z)= rz$.
Now, by the continuity of the $N$ action on $\Pi(N)$,
\begin{equation*}
\widehat{\sigma_{\!t_i}\!\!\!\upharpoonright_{\!N}}(z_i)
= \widehat{\sigma_{\!s_{j_0}r_i}\!\!\upharpoonright_{\!N}}(z_i)=
\widehat{\sigma_{\!s_{j_0}}\!\!\!\upharpoonright_{\!N}}
\left(\widehat{\sigma_{\!r_i}\!\!\upharpoonright_{\!N}}(z_i)\right)=
\widehat{\sigma_{\!s_{j_0}}\!\!\!\upharpoonright_{\!N}}
(r_i z_i)
\to \widehat{\sigma_{\!s_{j_0}}\!\!\!\upharpoonright_{\!N}}
(r z).
\end{equation*}
But
\begin{equation*}
\widehat{\sigma_{\!s_{j_0}}\!\!\!\upharpoonright_{\!N}}(rz)=
\widehat{\sigma_{\!s_{j_0}}\!\!\!\upharpoonright_{\!N}}
\left(\widehat{\sigma_{\!r}\!\!\upharpoonright_{\!N}}(z)\right) =
\widehat{\sigma_{\!s_{j_0}r}\!\!\upharpoonright_{\!N}}(z) =
\widehat{\sigma_{\!t}\!\!\upharpoonright_{\!N}}(z).
\end{equation*}
Thus $\zeta\colon G\times\Pi(N)\rightarrow\Pi(N)$ is continuous.
\end{proof}

\begin{remk}
One can relax here the assumption that $N$ has
finite index in $G$.
Assuming only that $N$ is a clopen normal subgroup will suffice.
Indeed, under this assumption the space $G/ N$ is discrete
and (denoting the
quotient map $Q\colon G \to G/N$) we have, in the notation of
the proof above, $Q(t_i) \to Q(t)$. Thus, eventually $Q(t_i) = Q(t)$ and we can assume that
for every $i$, $t_i = s_{j_0}r_i$ for some fixes $s_{j_0} \in G$ and $r_i \in N$.
Now proceed as before.
\end{remk}

With the above preparations at hand, we are ready to complete our proof of Theorem~\ref{thm2.7}.

\begin{proof}[Proof of Theorem~\ref{thm2.7}]
By the universality of $(G,\Pi(G))$, there exists a $(G,\Pi(G))\xrightarrow{\phi}(G,\Pi(N))$, where $(G,\Pi(N))=(G,\Pi(N),\zeta)$ as in Lemma~\ref{lem5.3}.

Next consider the flow $(N,\Pi(G))$ which is obtained by restricting the action of $G$ on $\Pi(G)$ to the action of its subgroup $N$. By Lemma~\ref{lem5.1} this flow is minimal and proximal;
therefore there is a homomorphism
$(N,\Pi(N))\xrightarrow{\psi}(N,\Pi(G))$.
Now $(N,\Pi(N))\xrightarrow{\phi\circ\psi}(N,\Pi(N))$ is an endomorphism, hence it is the identity map by Lemma~\ref{lem4.1}.
Therefore $(G,\Pi(G))\xrightarrow{\phi}(G,\Pi(N))$ is an isomorphism.

By Lemma~\ref{lem5.1} again, $(S,\Pi(G))$ is minimal and proximal,
hence there exists a homomorphism
$(S,\Pi(S))\xrightarrow{\theta}(S,\Pi(G))$.
Similarly, $(N,\Pi(S))$ is minimal and proximal and there exists a homomorphism $(N,\Pi(N))\xrightarrow{\lambda}(N,\Pi(S))$. Thus the composition
\begin{equation*}
(N,\Pi(N))\xrightarrow{\lambda}(N,\Pi(S))\xrightarrow{\theta}(N,\Pi(G))
\xrightarrow{\phi}(N,\Pi(N))
\end{equation*}
is an endomorphism of $(N,\Pi(N))$, hence it is the identity map.
Thus $(N,\Pi(N))\xrightarrow{\lambda}(N,\Pi(S))$ is an isomorphism.
Using this isomorphism, together with the
isomorphism
$(G,\Pi(G))\xrightarrow{\phi}(G,\Pi(N))$, an action of $G$ on $\Pi(S)$ can be defined so that $(G,\Pi(S))$ and $(G,\Pi(G))$ are isomorphic.
\end{proof}

%

It is well known that
for locally compact groups
every subgroup of an amenable group is amenable.
How about strongly amenable groups?

\begin{cor}\label{cor5.5}
Let $G$ be a topological group and $S$ a closed subgroup of finite index in $G$. Then $G$ is strongly amenable if and only if so is $S$.
\end{cor}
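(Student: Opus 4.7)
The plan is to deduce Corollary~\ref{cor5.5} as an essentially immediate consequence of Theorem~\ref{thm2.7}, using only the definitional characterization of strong amenability. Recall that by the remark following item (C) in \S\ref{sec2.2}, a topological group $H$ is strongly amenable if and only if its universal minimal proximal flow $\Pi(H)$ is a one-point space. So the task reduces to showing that $\Pi(G)$ is a singleton if and only if $\Pi(S)$ is.

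First I would invoke Theorem~\ref{thm2.7}: since $S$ is a closed subgroup of finite index in $G$, the $S$-action on $\Pi(S)$ extends to a $G$-action, and the resulting flow $(G,\Pi(S))$ is isomorphic to $(G,\Pi(G))$. In particular, their phase spaces are homeomorphic (a fortiori, they have the same cardinality). Therefore $\Pi(G)$ is a singleton precisely when $\Pi(S)$ is, which by the criterion above gives both implications of the corollary at once.

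For the forward direction one observes: if $G$ is strongly amenable then $\Pi(G)=\{\mathrm{pt}\}$, so the isomorphism forces $\Pi(S)=\{\mathrm{pt}\}$, whence $S$ is strongly amenable. For the reverse direction, if $S$ is strongly amenable then $\Pi(S)=\{\mathrm{pt}\}$, and again the isomorphism yields $\Pi(G)=\{\mathrm{pt}\}$, so $G$ is strongly amenable.

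There is no substantive obstacle here; all the real work has already been carried out in the proof of Theorem~\ref{thm2.7} (construction of the normal subgroup $N$ of finite index contained in $S$, Lemma~\ref{lem5.3} extending the $N$-action to a $G$-action on $\Pi(N)$, and the chain of endomorphisms collapsing to identities via Lemma~\ref{lem4.1}). The corollary simply reads off the cardinality consequence of that isomorphism. It is worth noting that, unlike the classical fact that subgroups of amenable locally compact groups are amenable, this argument works for arbitrary topological groups and in both directions, because the finite-index hypothesis produces a genuine isomorphism of universal minimal proximal flows rather than just a factor map.
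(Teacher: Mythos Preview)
Your proposal is correct and follows exactly the same approach as the paper: invoke Theorem~\ref{thm2.7} to obtain $\Pi(G)\cong\Pi(S)$, and then observe that $\Pi(G)$ is a singleton if and only if $\Pi(S)$ is, which by the definitional characterization of strong amenability gives both directions. The paper's proof is simply a two-line version of what you wrote.
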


\begin{proof}
By Theorem~\ref{thm2.7}, it follows that $\Pi(G)\cong\Pi(S)$. So $\Pi(G)$ is a singleton if and only if so is $\Pi(S)$. This proves Corollary~\ref{cor5.5}.
\end{proof}


\begin{que}\label{q5.6}
Let $G$ be a locally compact group. If $G$ is a compact extension of $S$,
is it true that $(G,\Pi(G))\cong(G,\Pi(S))$?
\end{que}

If the answer to Question~\ref{q5.6} is positive, then as in Corollary~\ref{cor5.5}
we will conclude that $G$ is strongly amenable if and only if so is $S$.

\section{Commensurators}\label{Comm}
If $G$ is a group and $H<G$ a subgroup, we denote $H^g = gHg^{-1}$ and $H_g = H \cap H^g$.
The {\it commensurator of $H$ in $G$} is defined by
$$
\bH=\textrm{Comm}_G(H) =\{g \in G\,|\, H_g \ {\text{ has finite index in both}} \  H\  {\text{and}}\  H^g\}.
$$

As a corollary of \cite[Theorem II.4.4]{G76} we obtain the following.

\begin{thm}
Assume that $\bH$ is countable. Then,
the canonical action of $H$ on $\Pi(H)$ can be extended to an action
$(\bH, \Pi(H))$.
That is, there is a homomorphism $g \mapsto \hat{g}$, from $\bH$ to
${\mathrm{Homeo}}(\Pi(H))$,
such that
\begin{itemize}
\item
$\hat{g}$ satisfies the equation $\hat{g}(tx) = (gtg^{-1})\hat{g}(x)$ for every
$t\in H\cap g^{-1}Hg$ and $x\in\Pi(H)$.
\item
for $h \in H$ and $x \in \Pi(H)$ we have
$\hat{h}(x) = h(x)$.
\item
for every $g \in \bH$
$$
(H, \Pi(H)) \cong (H, \Pi(H_g)) \cong (H^g, \Pi(H_g)) \cong (H^g, \Pi(H^g)).
$$
Analogous statements hold for $\Pi_s$.
\end{itemize}
\end{thm}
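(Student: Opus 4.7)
The plan is to build, for each $g\in\bH$, a homeomorphism $\hat{g}\colon\Pi(H)\to\Pi(H)$ by combining two applications of Theorem~\ref{thm2.7} with the conjugation isomorphism $\sigma_g$, and then to check that $g\mapsto\hat{g}$ is a homomorphism by the standard uniqueness machinery for minimal proximal flows. The decisive structural observation is that for every $g\in\bH$ both $H_g=H\cap gHg^{-1}$ and $H_{g^{-1}}=H\cap g^{-1}Hg$ are closed subgroups of finite index in $H$, and that $\sigma_g$ restricts to a topological group isomorphism $H_{g^{-1}}\to H_g$ (because $gH_{g^{-1}}g^{-1}=H_g$).

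First I would apply Theorem~\ref{thm2.7} twice to obtain isomorphisms of $H$-flows
\[
\alpha_{g^{-1}}\colon (H,\Pi(H))\xrightarrow{\cong}(H,\Pi(H_{g^{-1}})),\qquad \alpha_g\colon (H,\Pi(H))\xrightarrow{\cong}(H,\Pi(H_g)),
\]
with $\Pi(H_{g^{-1}})$ and $\Pi(H_g)$ carrying the extended $H$-actions furnished by that theorem. Pulling back the natural $H_g$-action on $\Pi(H_g)$ through $\sigma_g\colon H_{g^{-1}}\to H_g$ turns $\Pi(H_g)$ into a minimal proximal $H_{g^{-1}}$-flow; universality of $\Pi(H_{g^{-1}})$ combined with Lemmas~\ref{lem4.1}--\ref{lem4.2} then produces a unique homeomorphism $\tilde{\gamma}_g\colon\Pi(H_{g^{-1}})\to\Pi(H_g)$ satisfying $\tilde{\gamma}_g(tx)=(gtg^{-1})\tilde{\gamma}_g(x)$ for every $t\in H_{g^{-1}}$. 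I would then define $\hat{g}:=\alpha_g^{-1}\circ\tilde{\gamma}_g\circ\alpha_{g^{-1}}$. Tracking the three intertwining relations through this composition immediately yields $\hat{g}(tx)=(gtg^{-1})\hat{g}(x)$ for all $t\in H\cap g^{-1}Hg$. When $g=h\in H$ one has $H_h=H_{h^{-1}}=H$, so $\alpha_h$ and $\alpha_{h^{-1}}$ collapse to the identity by Lemma~\ref{lem4.1}, and the same uniqueness forces $\tilde{\gamma}_h$ to coincide with the action of $h$ on $\Pi(H)$; hence $\hat{h}=h$.

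The main obstacle is verifying the homomorphism property $\widehat{g_1g_2}=\hat{g}_1\circ\hat{g}_2$. I would handle this by a uniqueness argument: a direct unwinding shows that both $\widehat{g_1g_2}$ and $\hat{g}_1\circ\hat{g}_2$ satisfy the intertwining $\phi(tx)=((g_1g_2)t(g_1g_2)^{-1})\phi(x)$ on the finite-index subgroup
\[
K\;:=\;H_{g_2^{-1}}\cap H_{(g_1g_2)^{-1}}
\]
of $H$. By Lemma~\ref{lem5.1} the flow $(K,\Pi(H))$ is minimal and proximal, so $(\hat{g}_1\circ\hat{g}_2)^{-1}\circ\widehat{g_1g_2}$ is an endomorphism of this $K$-flow and is therefore the identity by Lemma~\ref{lem4.1}. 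The delicate step is the bookkeeping of intersections of conjugates of $H$: one must confirm that whenever $t\in K$ the element $g_2tg_2^{-1}$ lies in $H\cap g_1^{-1}Hg_1$, which is precisely what legitimises applying the defining relation of $\hat{g}_1$ at $g_2tg_2^{-1}$.

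The four-fold isomorphism then reads off the construction: $\alpha_g$ supplies $(H,\Pi(H))\cong(H,\Pi(H_g))$; Theorem~\ref{thm2.7} applied to the pair $H_g<H^g$ supplies $(H^g,\Pi(H_g))\cong(H^g,\Pi(H^g))$; and the middle isomorphism $(H,\Pi(H_g))\cong(H^g,\Pi(H_g))$ is induced by $\sigma_g\colon H\to H^g$ via the functoriality of universal minimal proximal flows together with Lemmas~\ref{lem4.1}--\ref{lem4.2}. Since $\bH$ is countable, one endows it with the discrete topology, for which joint continuity of the constructed action is automatic. The strongly proximal case proceeds \emph{verbatim}, as all the enabling tools (universality, uniqueness of endomorphisms, preservation of minimality and proximality under restriction to finite-index subgroups) have direct $\Pi_s$-analogues.
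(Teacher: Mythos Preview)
Your argument is correct and takes a genuinely different route from the paper's. The paper enumerates $\bH=\{g_0,g_1,\dotsc\}$, for each $k$ passes to a normal subgroup $N\lhd H$ of finite index contained in $\bigcap_{i\le k} H^{g_i}$ so that each $\sigma_{g_i}\!\!\upharpoonright_N\in\mathrm{Aut}(N)$, and then invokes the $\widehat{\ }$ construction on $\mathrm{Aut}(N)$ acting on $(N,\Pi(H))$; a separate consistency check shows that shrinking $N$ does not change the resulting homeomorphism, and one lets $k\to\infty$. Your construction instead produces $\hat g$ directly for each $g$ via two applications of Theorem~\ref{thm2.7} and the conjugation $\sigma_g\colon H_{g^{-1}}\to H_g$, and then secures the group law $\widehat{g_1g_2}=\hat g_1\circ\hat g_2$ by the uniqueness-of-endomorphisms argument on the finite-index subgroup $K=H_{g_2^{-1}}\cap H_{(g_1g_2)^{-1}}$. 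This has two pleasant byproducts: the intertwining relation $\hat g(tx)=(gtg^{-1})\hat g(x)$ on $H_{g^{-1}}$, together with Lemma~\ref{lem4.1}, already pins down $\hat g$ uniquely (so your apparent dependence on the choice of $\alpha_g$, $\alpha_{g^{-1}}$, $\tilde\gamma_g$ is illusory), and countability of $\bH$ is used only at the very end to declare the discrete topology and conclude joint continuity, whereas in the paper it is woven into the inductive construction. The paper's method, on the other hand, stays closer to the machinery already built for Theorem~\ref{thm2.7} and avoids introducing the auxiliary spaces $\Pi(H_g)$, $\Pi(H_{g^{-1}})$.
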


\begin{proof}
Let $\{e= g_0, g_1, g_2, \dotsc, g_k, \dotsc\}$ be an enumeration of $\bH$ and for each $k$
let $H_k = \bigcap_{i=0}^k g_i H^{g_i}$.
Then, $H_k$ has finite index in $H$ and there is a
normal subgroup $N $ of $H$, with $N < H_k$,
such that $N$ is of finite index in $H$.
The flow $(N, \Pi(H))$ is minimal, proximal and for each $0 \le i \le k$,
the map
$\sigma_{\!g_i}\!\!\upharpoonright\!\!N$
is an automorphism of $N$.
Thus the corresponding map
$\widehat{\sigma_{\!g_i}\!\!\upharpoonright\!\!N}
\colon\Pi(H) \to \Pi(H)$ is a homeomorphism and
the map $\; \widehat{{}}\,\colon \langle g_0, g_1, g_2, \dotsc, g_k \rangle \to \mathrm{Homeo}(\Pi(H))$
is a group homomorphism.
Note that when $N_1 < N_2 < H$ and $g \in \bH$ normlizes both $N_1$ and $N_2$ we have, with $\widehat{g_j}$ the corresponding homeomorphisms induced by
$\sigma_{\!g_j}\!\!\!\upharpoonright_{\!N_j}, \ j=1,2$,
$$
\widehat{{g_2}^{-1}} (\widehat{g_1}(tx)) =
\widehat{{g_2}^{-1}} (g t g^{-1} \widehat{g_1}(x))=
t\widehat{{g_2}^{-1}}(\widehat{g_1}(x)),
$$
for every $t \in N_1$. Thus $\phi = \widehat{{g_2}^{-1}}\circ \widehat{g_1}$
is an automorphism of the minimal proximal flow $(N_1, \Pi(H))$, whence
$\widehat{{g_2}^{-1}}\circ \widehat{g_1}=\textit{id}_{\Pi(H)}$, and
$\widehat{{g_2}} = \widehat{g_1}$.

We now let $k\to\infty$ to conclude the proof.
\end{proof}

With $G = \textrm{SL}(n, \mathbb{R})$ and $H=\textrm{SL}(n,\mathbb{Z})$
we have $\textrm{Comm}_G(H) = \textrm{SL}(n, \mathbb{Q})$ and
a nice instance of this theorem is the result that the action
$(\textrm{SL}(n,\mathbb{Z}), \Pi(\textrm{SL}(n,\mathbb{Z})))$ extends to an action
$(\textrm{SL}(n, \mathbb{Q}), \Pi(\textrm{SL}(n,\mathbb{Z})))$.
Again an analogous result is valid for $\Pi_s$ instead of $\Pi$.

\section*{Acknowledgments}
X.~Dai was partly supported by National Natural Science Foundation of China (Grant Nos. 11431012, 11271183) and PAPD of Jiangsu Higher Education Institutions; and E.~Glasner was supported by a grant of the Israel Science Foundation (ISF 668/13).



\end{document}